\newtheorem{theorem}{Theorem}[section]  
\newtheorem{definition}[theorem]{Definition}
\newtheorem{lemma}[theorem]{Lemma}
\newtheorem{proposition}[theorem]{Proposition}
\newtheorem{corollary}[theorem]{Corollary}
\newtheorem*{corollary*}{Corollary}
\newtheorem{remark}[theorem]{Remark}
\newtheorem{question}[theorem]{Question}
\author{Dieter Degrijse}
\address{Department of Mathematics, Univeristy of Copenhagen, Denmark}%
\email{D.Degrijse@math.ku.dk}%
\thanks{The first author was supported by the Danish National Research Foundation through the Centre for Symmetry and Deformation (DNRF92) and the second by  Gobierno de Arag\'on, European Regional Development Funds and Ministerio de Educaci\'on, Cultura y Deporte through
MTM2010-19938-C03-03. }
\author{Conchita Mart{\'\i}nez-P{\'e}rez}
\address{Department of Mathematics, Univeristy of Zarargoza, Spain}
\email{conmar@unizar.es}
\title[]{Dimension invariants for groups admitting a cocompact model for proper actions}
\date{\today}
\newcommand{\mF}{\mathcal {F}}
\newcommand{\mFK}{\mathcal{F}_K}
\newcommand{\Z}{\mathbb Z}
\newcommand{\orb}{\mathcal{O}_{\mF}G}
\newcommand{\subK}{\mathcal{S}_{\mFK}G}
\newcommand{\orbmod}{\mbox{Mod-}\mathcal{O}_{\mF}G}
\newcommand{\zmod}{\mathbb{Z}\mbox{-Mod}}
\begin{document}
\maketitle
\begin{abstract}
Let $G$ be a group that admits a cocompact classifying space for proper actions $X$. We derive a formula for the Bredon cohomological dimension for proper actions of $G$ in terms of the relative cohomology with compact support of certain pairs of subcomplexes of $X$. We use this formula to compute the Bredon cohomological dimension for proper actions of fundamental groups of non-positively curved simple complexes of finite groups. As an application we show that if a virtually torsion-free group acts properly and chamber transitively on a building, its virtual cohomological dimension coincides with its Bredon cohomological dimension. This covers the case of Coxeter groups and graph products of finite groups.
\end{abstract}
\section{Introduction}
Let $G$ be a discrete infinite group. By a proper $G$-CW-complex we mean a $G$-CW-complex with finite point stabilizers. One says a $G$-CW complex is cocompact if the orbit space $G \setminus X$ is compact. A classifying space for proper actions for $G$ is a proper $G$-CW-complex $X$ such that the fixed point subcomplexes $X^H$ are contractible for every finite subgroup $H$ of $G$. Such a space $X$ is also called a model for $\underline{E}G$. These spaces appear naturally throughout geometric group theory and related areas, and their study is of fundamental importance for our understanding of infinite discrete groups and their actions on spaces. We refer the reader to the survey paper \cite{Luck2} for more information. An interesting invariant of the group $G$ is its geometric dimension for proper actions. This invariant, denoted by $\underline{\mathrm{gd}}(G)$, is by definition the smallest possible dimension that a model for $\underline{E}G$ can have. The invariant $\underline{\mathrm{gd}}(G)$ is often studied via its algebraic counterpart, which is the Bredon cohomological dimension $\underline{\mathrm{cd}}(G)$ of the group $G$ (see Section 2). These two notions of dimension coincide (see \cite{LuckMeintrup}), except for the possibility that one could have $\underline{\mathrm{cd}}(G)=2$ but $\underline{\mathrm{gd}}(G)=3$ (see \cite{BradyLearyNucinkis}).\\

It is a well known fact that if $G$ admits a cocompact contractible proper $G$-CW-complex $X$, then the cohomology of $G$ with group ring coefficients can be computed as the cohomology with compact support of $X$. If $G$ is virtually torsion-free, this implies that the virtual cohomological dimension of $G$ can be computed as 
   \begin{equation} \label{eq: vcd} \mathrm{vcd}(G)=\max \{  n \in \mathbb{N} \ | \  \mathrm{H}^n_c(X)\neq 0  \} . \end{equation}

In this paper we derive a formula that enables one to compute the Bredon cohomological dimension of a group $G$ via the relative cohomology with compact support of certain pairs of subcomplexes of a cocompact model for $\underline{E}G$, assuming this exists.
\begin{theorem}\label{th: bredon}{\rm[Cor.~\ref{cor: key cor}] }Let $G$ be a group that admits a cocompact model $X$ for $\underline{E}G$. Then

\[    \underline{\mathrm{cd}}(G)= \max\{n \in \mathbb{N} \ | \ \exists K \in \mathcal{F} \ \mbox{s.t.} \ \mathrm{H}_c^{n}(X^K,X^K_\mathrm{sing})  \neq 0\}   \] 
where  $\mathcal{F}$ is the family of finite subgroups of $G$ and $X^K_\mathrm{sing}$ is the subcomplex of $X^{K}$ consisting of those cells that are fixed by a finite subgroup of $G$ that strictly contains $K$.
\end{theorem}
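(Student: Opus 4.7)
The plan is to use the cellular Bredon chain complex of $X$ as a finite free resolution of the trivial Bredon module $\underline{\Z}$ in $\orb$-modules, and then to extract the formula through a filtration of the associated cochain complex by the order of the cell stabilizer. Since $X$ is a cocompact model for $\underline{E}G$, the complex $C_*^{\mathrm{Br}}(X)$ given by $C_n^{\mathrm{Br}}(X)(G/H) = C_n(X^H;\Z)$ is such a resolution, splitting in degree $n$ as $\bigoplus_{[e]} \Z[-, G/G_e]$ over $G$-orbit representatives of $n$-cells of $X$. By Yoneda, applying $\mathrm{Hom}_{\orb}(-, M)$ turns this into a finite cochain complex whose degree-$n$ term is $\bigoplus_{[e]} M(G/G_e)$ and whose cohomology is $H^*_{\mF}(G;M)$; thus $\underline{\mathrm{cd}}(G)$ is the supremum of $n$ such that this cochain complex has nonzero $n$-th cohomology for some Bredon module $M$.

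The core technical step is to filter this cochain complex by stabilizer order: the decreasing filtration $F_p$ generated by $[e]$ with $|G_e| \geq p$ is stable under the Bredon differential, because boundary cells of a cell $e$ have stabilizers containing $G_e$. The associated graded decomposes into contributions indexed by conjugacy classes $[K]$ of finite subgroups, and the $[K]$-piece is identified, via the standard bijection between $G$-orbits of cells with $G_e = K$ and $W_G(K)$-orbits of cells in $X^K \setminus X^K_{\mathrm{sing}}$, with the relative equivariant cochain complex $\mathrm{Hom}_{\Z W_G(K)}(C_*(X^K, X^K_{\mathrm{sing}}), M(G/K))$. When $M(G/K)$ is taken to be $\Z W_G(K)$ and one uses that $W_G(K)$ acts cocompactly on $X^K$ (since $X$ is cocompact), this reduces to the compactly supported relative cochain complex $C^*_c(X^K, X^K_{\mathrm{sing}}; \Z)$.

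To conclude the formula I would prove both inequalities. For the lower bound, given a witness pair $(K,n)$ with $H^n_c(X^K, X^K_{\mathrm{sing}}) \neq 0$, I would construct a Bredon module $M_K$ concentrated on the conjugacy class of $K$ with $M_K(G/K) = \Z W_G(K)$ (a coinduction of the trivial $W_G(K)$-module) such that the entire Bredon cochain complex computing $H^*_{\mF}(G; M_K)$ agrees with $C^*_c(X^K, X^K_{\mathrm{sing}}; \Z)$, thereby realizing $H^n_c(X^K, X^K_{\mathrm{sing}})$ as $H^n_{\mF}(G; M_K)$ and giving $\underline{\mathrm{cd}}(G) \geq n$. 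For the upper bound, a Bieri--Eckmann style dimension-shifting argument writes an arbitrary coefficient module $M$ as a quotient of a direct sum of such $M_K$'s and uses this to bound $H^n_{\mF}(G; M)$ by the same data.

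The principal obstacle is the upper bound: the filtration alone yields only a spectral sequence whose $E_1$-page involves $W_G(K)$-equivariant cohomology of the relative pairs, which a priori can be nonzero far beyond the compactly supported cohomology, due to the possible cohomological dimension of each Weyl group $W_G(K)$. Reducing this to a formula depending only on the compactly supported relative cohomologies requires exhibiting enough surjections by coinduced Bredon modules of the type $M_K$ to power the dimension-shifting argument; producing these surjections and verifying that the filtration-graded identification is compatible with them is the delicate algebraic content of the preceding result (Cor.~\ref{cor: key cor}) from which this theorem is stated as a corollary.
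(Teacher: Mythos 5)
Your lower bound is essentially the paper's argument in disguise: your module $M_K$, concentrated on the conjugacy class of $K$ with value $\Z[W_G(K)]$ at $G/K$, is exactly the paper's module $Q_K$ (defined as the cokernel of $N_K\rightarrow P_K$), and the identification of the resulting cochain complex $\mathrm{Hom}_{\orb}(C_\ast(X^-),Q_K)$ with $C^\ast_c(X^K,X^K_{\mathrm{sing}})$ is the paper's Theorem \ref{th: compact support bredon}, proved there by a Bredon analogue of Brown's $\mathrm{Hom}_c(-,\Z)$ lemma rather than by your filtration of the cochain complex by stabilizer order (the filtration is not needed for this step, since Yoneda already computes each degree). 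This half of your plan is sound, and it gives $\underline{\mathrm{cd}}(G)\geq n$ whenever $\mathrm{H}^n_c(X^K,X^K_{\mathrm{sing}})\neq 0$.

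The genuine gap is the upper bound. Your proposed mechanism --- writing an arbitrary Bredon module $M$ as a quotient of a direct sum of the modules $M_K$ and dimension shifting --- fails because the $M_K=Q_K$ are not generators of $\orbmod$. Concretely, for $K\neq 1$ there is no nonzero natural transformation $Q_K\rightarrow\underline{\Z}$: for a morphism $G/H\rightarrow G/K$ with $H\subsetneq K$, naturality forces $F(G/K)=\underline{\Z}(\varphi)\circ F(G/K)=F(G/H)\circ Q_K(\varphi)=0$ since $Q_K(G/H)=0$; hence not even the trivial module is a quotient of a direct sum of such modules, and no amount of care with the filtration produces the surjections your Bieri--Eckmann argument needs. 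The paper closes this gap differently: first, cocompactness gives a finite type free resolution, so Bredon cohomology commutes with direct sums and the standard dimension-shifting argument (with the honest free modules $P_K$, which do generate) yields $\underline{\mathrm{cd}}(G)=\sup\{n\mid \mathrm{H}^n_{\mathcal{F}}(G,P_K)\neq 0$ for some $K\in\mathcal{F}\}$; then, in the top degree $n=\underline{\mathrm{cd}}(G)$ only, one chooses $K$ of minimal order with $\mathrm{H}^n_{\mathcal{F}}(G,P_K)\neq 0$ and uses the short exact sequence $0\rightarrow N_K\rightarrow P_K\rightarrow Q_K\rightarrow 0$: since $N_K$ vanishes at all subgroups of order at least $|K|$, it admits a free cover by modules $P_L$ with $|L|<|K|$, and right exactness of $\mathrm{H}^n_{\mathcal{F}}(G,-)$ in top degree plus minimality of $|K|$ force $\mathrm{H}^n_{\mathcal{F}}(G,N_K)=0$, whence $\mathrm{H}^n_{\mathcal{F}}(G,Q_K)\neq 0$ (Proposition \ref{prop: mod to dim}). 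Replacing your surjectivity claim by this minimal-order argument is precisely what is missing from your proposal.
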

Using Theorem \ref{th: bredon}, we are able to compute the Bredon cohomological dimension of certain simple complexes of groups over a finite poset $\mathcal{Q}$ (see Definition \ref{def: complex}).
\begin{theorem}\label{th: nonpos}{\rm[Th.~\ref{th: main app nonpos}]} Let $G(\mathcal{Q})$ be a non-positively curved simple complex of finite groups with $K=|\mathcal{Q}|$ simply connected and fundamental group $G$. Then 

\[    \underline{\mathrm{cd}}(G)=\max\{ n \in \mathbb{N} \ | \ {\mathrm{H}}^{n}(K_{\Omega_J},K_{>\Omega_J})\neq 0 \ \mbox{for some} \ J \in \mathcal{S}    \} \]
where $\Omega_J$ is the set of elements $U$ of $\mathcal{Q}$ for which the local group $P_U$ equals $P_J$ as subgroups of $G$, and $K_{\Omega_J}$ $(K_{> \Omega_J})$ is the geometric realization of the subposet of $\mathcal{Q}$ consisting of elements that are (strictly) larger than some element of $\Omega_J$.

\end{theorem}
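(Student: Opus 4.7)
The approach is to apply Theorem~\ref{th: bredon} to the development $X := D(G(\mathcal{Q}))$ of the complex of groups. Since $G(\mathcal{Q})$ is non-positively curved and $|\mathcal{Q}|$ is simply connected, standard complex-of-groups theory (Bridson--Haefliger) gives that $X$ is a CAT(0) cell complex on which $G$ acts properly and cocompactly with strict fundamental domain $K = |\mathcal{Q}|$; since a finite group of isometries of a CAT(0) space has a nonempty convex (hence contractible) fixed subcomplex, $X$ is a cocompact model for $\underline{E}G$, and Theorem~\ref{th: bredon} applies. Cocompactness forces $X^H$ to be compact, so $\mathrm{H}^n_c = \mathrm{H}^n$ throughout.

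Next, reduce the family of $H$'s. Every finite $H \leq G$ fixes a point of the CAT(0) space $X$ and is therefore $G$-conjugate to a subgroup of some local group $P_V$. Conjugation by $g \in G$ sends $(X^H, X^H_{\mathrm{sing}})$ isomorphically to $(X^{gHg^{-1}}, X^{gHg^{-1}}_{\mathrm{sing}})$, so it suffices to let $H$ range over a set of representatives of finite subgroups of $G$ realised as local groups; I take these to be $\{P_J : J \in \mathcal{S}\}$, with $\mathcal{S}$ indexing the distinct local groups appearing in $\mathcal{Q}$.

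The crucial step is the identification, for each such $J$, of $(X^{P_J}, X^{P_J}_{\mathrm{sing}})$ with $(K_{\Omega_J}, K_{>\Omega_J})$ modulo the Weyl group $W_J := N_G(P_J)/P_J$. Cells of the development are pairs $(V, gP_V)$ with stabiliser $gP_V g^{-1}$, so such a cell lies in $X^{P_J}$ iff $g^{-1}P_J g \leq P_V$ and in $X^{P_J}_{\mathrm{sing}}$ iff this inclusion is strict. Projecting to $K = G\setminus X$ and passing to the $W_J$-quotient, the image of $X^{P_J}$ is the subcomplex realizing $\{V \in \mathcal{Q} : V \geq U \text{ for some } U \in \Omega_J\} = K_{\Omega_J}$, with $W_J\setminus X^{P_J}_{\mathrm{sing}}$ corresponding to $K_{>\Omega_J}$. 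A standard cellular cochain argument then yields a natural identification $\mathrm{H}^*(X^{P_J}, X^{P_J}_{\mathrm{sing}}) \cong \mathrm{H}^*(K_{\Omega_J}, K_{>\Omega_J}; \mathbb{Z}[W_J])$, and the top nonvanishing degree of the latter coincides with that of $\mathrm{H}^*(K_{\Omega_J}, K_{>\Omega_J})$ with $\mathbb{Z}$-coefficients. Taking the maximum over $J \in \mathcal{S}$ gives the theorem.

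The main obstacle is the geometric identification in the third paragraph. One must account for all $G$-conjugates of $P_J$ that can be contained in the various local groups $P_V$, verify that these embeddings are correctly permuted by the $N_G(P_J)$-action, and check that the $W_J$-quotient of $X^{P_J}$ sees exactly the subcomplex $K_{\Omega_J} \subseteq K$ (and not a larger subcomplex). This step uses in an essential way the rigidity of the non-positively curved complex-of-groups structure; the subsequent cohomological comparison is then routine.
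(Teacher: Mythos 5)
Your overall strategy (apply Theorem \ref{th: bredon} to the CAT(0) development and then identify the relative cohomology of $(X^{P_J},X^{P_J}_{\mathrm{sing}})$ with cohomology of pairs of subcomplexes of $K$) is the same as the paper's, but the execution has a genuine gap, and one of your stated reductions is simply false. The claim ``cocompactness forces $X^H$ to be compact, so $\mathrm{H}^n_c=\mathrm{H}^n$ throughout'' is wrong: cocompactness of the $G$-action does not make fixed-point sets compact (already $X^{\{1\}}=X$ is non-compact for $G$ infinite, and $X^{P_J}$ is non-compact whenever infinitely many cosets $gP_V$ satisfy $g^{-1}P_Jg\subseteq P_V$). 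The entire difficulty of the theorem is precisely to compute the \emph{compactly supported} relative cohomology of a non-compact fixed-point complex; the paper does this via the total ordering on $G$, the index set $\mathcal{L}(J)$, the filtration $X^{P_J}_n$ and excision (Lemmas \ref{sets} and \ref{lemma: important lemma3}, Lemma \ref{eq: splitting3}, Theorem \ref{th: main complex bredon}), obtaining a direct sum decomposition $C^{\ast}_c(X^{P_J},X^{P_J}_{\mathrm{sing}})\cong\bigoplus_{g\in\mathcal{L}(J)}C^{\ast}(K_{\Omega_g},K_{>\Omega_g})$. Your ``standard cellular cochain argument'' yielding $\mathrm{H}^*(X^{P_J},X^{P_J}_{\mathrm{sing}})\cong\mathrm{H}^*(K_{\Omega_J},K_{>\Omega_J};\mathbb{Z}[W_J])$ is exactly this non-trivial step, which you acknowledge as ``the main obstacle'' but do not carry out; as stated it is also inaccurate, since the translates of $K$ contributing to $X^{P_J}$ are indexed by the cosets $P_Jg$ with $g^{-1}P_Jg$ equal to \emph{some} local group (possibly a local group $P_{J'}\neq P_J$, contributing the pair $(K_{\Omega_{J'}},K_{>\Omega_{J'}})$), not just by $W_J=N_G(P_J)/P_J$.

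Two further points need repair. First, the image of $X^{P_J}$ in $K=G\backslash X$ need not be $K_{\Omega_J}$: a cell $(gP_V,V)$ lies in $X^{P_J}$ iff $g^{-1}P_Jg\subseteq P_V$, and the inclusion $P_U\subseteq P_V$ of local groups does \emph{not} imply $U\leq V$ in $\mathcal{Q}$ --- the paper explicitly warns that $gK_{\Omega_g}\subseteq X^{P_J}\cap gK$ can be proper; only after passing to the relative (mod singular) setting and using Lemma \ref{sets} does the identification work. Second, your reduction of the family: every finite $H$ is conjugate \emph{into} a local group, but that does not make it conjugate \emph{to} a local group; you still need the observation (made in the proof of Theorem \ref{th: main app}) that for a finite subgroup $F$ not conjugate to any local group one has $X^F=X^F_{\mathrm{sing}}$, so its relative cohomology vanishes and it can be discarded. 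With the compactness claim removed, the correct index set in place, and the splitting of the compactly supported cochain complex actually proved (the content of Theorem \ref{th: main complex bredon}), your outline becomes the paper's argument.
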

As illustrated in the following corollary, this theorem can for example be used to determine when the fundamental group of a non-positively curved simple complex of finite groups is not virtually free.
\begin{corollary} {\rm[Cor.~\ref{ngon}]} The geometric dimension for proper actions of the fundamental group $G$ of a non-positively curved $n$-gon of finite groups is $2$. In particular, $G$ is not virtually free. 

\end{corollary}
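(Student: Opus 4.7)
\emph{Plan.} I would apply Theorem~\ref{th: nonpos} to the face poset $\mathcal{Q}$ of the $n$-gon, ordered (with the convention implicit in that theorem) so that the $2$-cell $F$ is the unique minimum of $\mathcal{Q}$, the edges are intermediate, and the $n$ vertices are maximal. The geometric realization $K=|\mathcal{Q}|$ is then the barycentric subdivision of the $n$-gon, in particular a topological $2$-disk with boundary $\partial K=|\mathcal{Q}\setminus\{F\}|$ a circle.

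For the lower bound $\underline{\mathrm{cd}}(G)\geq 2$, I would choose $J$ with $P_J=P_F$. Since $F$ is the minimum of $\mathcal{Q}$ and $F\in\Omega_J$, every element of $\mathcal{Q}$ lies above some element of $\Omega_J$, so $K_{\Omega_J}=K$. The elements of $\mathcal{Q}$ strictly above $F$ form $\mathcal{Q}\setminus\{F\}$, and any additional elements forced into $\Omega_J$ by coincidences of local groups inside $G$ are either vertices (which are maximal, contributing nothing) or edges (whose strict upper sets already sit inside $\partial K$), so $K_{>\Omega_J}=\partial K$. Therefore
\[\mathrm{H}^2(K_{\Omega_J},K_{>\Omega_J})\cong\mathrm{H}^2(D^2,S^1)\cong\mathbb{Z}\neq 0,\]
and Theorem~\ref{th: nonpos} gives $\underline{\mathrm{cd}}(G)\geq 2$. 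The matching upper bound is immediate, since $\dim K=2$ kills $\mathrm{H}^n(K_{\Omega_J},K_{>\Omega_J})$ for every $n>2$ and every $J$.

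To pass from $\underline{\mathrm{cd}}$ to $\underline{\mathrm{gd}}$, I would invoke the standard fact that the universal development of a non-positively curved simple complex of finite groups over a simply connected base is a cocompact CAT(0) model for $\underline{E}G$ of dimension equal to that of the base. Here this yields $\underline{\mathrm{gd}}(G)\leq 2$, and combined with $\underline{\mathrm{cd}}(G)=2\leq\underline{\mathrm{gd}}(G)$ this forces $\underline{\mathrm{gd}}(G)=2$. A finitely generated virtually free group has $\underline{\mathrm{gd}}\leq 1$ via a Bass--Serre tree, so $G$ cannot be virtually free.

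The only step requiring genuine thought beyond formal bookkeeping is pinning down $(K_{\Omega_J},K_{>\Omega_J})\simeq(D^2,S^1)$: the potential worry is that coincidences among the local groups viewed inside $G$ enlarge $\Omega_J$ beyond $\{F\}$, but any such extras must be vertices or edges of the $n$-gon and so cannot push $K_{>\Omega_J}$ off of $\partial K$, which is what makes the pair topologically a disk modulo its boundary circle.
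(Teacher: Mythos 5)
Your argument is correct and follows essentially the same route as the paper: use the two-dimensional CAT(0) development as a cocompact model to get $\underline{\mathrm{gd}}(G)\leq 2$, and apply the complex-of-groups formula with $J$ the $2$-face to see $\mathrm{H}^2(K_{\Omega_J},K_{>\Omega_J})\cong\mathrm{H}^2(D^2,S^1)\neq 0$, hence $\underline{\mathrm{cd}}(G)=\underline{\mathrm{gd}}(G)=2$ and $G$ is not virtually free. Your extra care about $\Omega_J$ possibly being larger than $\{F\}$ is harmless but unnecessary, since the paper's definition of a simple complex of groups requires the structure maps $\varphi_{J,T}$ to be injective and non-surjective, so every edge and vertex group strictly contains $P_F$ and $\Omega_J=\{F\}$ exactly, which is how the paper argues.
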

In general, the invariant $\underline{\mathrm{cd}}(G)$ is very hard to determine and one is therefore also interested in simpler algebraic invariants that approximate $\underline{\mathrm{cd}}(G)$. The relative cohomological dimension $\mathcal{F}\mathrm{cd}(G)$ (see Definition \ref{def: rel}) (or the virtual cohomological dimension $\mathrm{vcd}(G)$ which coincides with $\mathcal{F}\mathrm{cd}(G)$ if $G$ is virtually torsion-free) can fulfill this role since it always satisfies $\mathcal{F}\mathrm{cd}(G)\leq \underline{\mathrm{cd}}(G)$.  In general, this is not a very good approximation. Indeed, in the celebrated paper \cite{LearyNucinkis} Leary and Nucinkis constructed examples of groups $G$ for which $\underline{\mathrm{cd}}(G)$ is finite but strictly larger than $\mathcal{F}\mathrm{cd}(G)$. Moreover, they show that the gap between the two can be arbitrarily large. However, in these examples the groups under consideration do not admit a cocompact model for $\underline{E}G$. In fact, it is still an open problem whether or not the equality $\mathcal{F}\mathrm{cd}(G)= \underline{\mathrm{cd}}(G)$ holds for all groups admitting a cocompact model for $\underline{E}G$. This is also not known for specific classes of group such as word-hyperbolic groups and CAT(0)-groups. It is known to be true for other important families of groups such as 
 elementary amenable groups of type $\operatorname{FP}_\infty$ (see \cite[Theorem 1.1]{KMPN} and the remark before Conjecture 1.4), $\mathrm{SL}(n,\mathbb{Z})$ (e.g. see \cite[Remark 5.25 ]{Luck2}), mapping class groups (\cite{Armart}) and outer automorphism groups of free groups (e.g. see \cite[4.9]{Luck2},\cite{Vogtmann}). Using a result by St.~John-Green, we point out that the $\mathcal{F}\mathrm{cd}(G)$ can be computed via the compactly supported cohomology of a contractible proper $G$-CW-complex (see Theorem \ref{fcd}). Combining this with Theorem \ref{th: nonpos}, we show that the equality $\mathcal{F}\mathrm{cd}(G)= \underline{\mathrm{cd}}(G)$ also holds for groups acting properly and chamber-transitively on a building of type $(W,S)$, where $(W,S)$ is  a finitely generated Coxeter system.

\begin{corollary}{\rm[Th.~\ref{building}]} Let $G$ be a discrete group that acts properly and chamber-transitively on a building of type $(W,S)$. Then 
\[\mathrm{vcd}(W)=\mathcal{F}\mathrm{cd}(G)=\underline{\mathrm{cd}}(G)= \max\{ n \in \mathbb{N} \ | \ \overline{\mathrm{H}}^{n-1}(K_{>J})\neq 0  \ \mbox{for some} \ J \in \mathcal{Q}  \}\]
where $\mathcal{Q}$ is the poset of spherical subsets of $S$ and $K=|\mathcal{Q}|$.
\end{corollary}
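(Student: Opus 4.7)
The plan is to reduce the computation of $\underline{\mathrm{cd}}(G)$ to Theorem~\ref{th: nonpos} and handle the other two quantities via Theorem~\ref{fcd} combined with Davis's compactly supported cohomology formulas. When $G$ acts properly and chamber-transitively on a building $\Delta$ of type $(W,S)$, the Davis realization $|\Delta|$ is a CAT(0) piecewise Euclidean cube complex on which $G$ acts properly and cocompactly, hence a cocompact model for $\underline{E}G$. The stabilizers of the simplices of the fundamental Davis chamber $K = |\mathcal{Q}|$ assemble into a non-positively curved simple complex of finite groups over (the opposite of) $\mathcal{Q}$, with simply connected realization and fundamental group $G$; this puts us in the hypothesis of Theorem~\ref{th: nonpos}.

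The first main step is to identify the sets $\Omega_J$ appearing in that theorem. Chamber-transitivity forces the local group $P_J$ at a spherical subset $J\in\mathcal{Q}$ to be isomorphic to the spherical parabolic $W_J$, and distinct spherical subsets produce distinct subgroups of $G$ (their fixed point sets in $|\Delta|$ differ). Consequently $\Omega_J = \{J\}$, so that $K_{\Omega_J}$ is the closed star of $J$ in $|\mathcal{Q}|$ and $K_{>\Omega_J} = K_{>J}$ is its link. Since the closed star is a cone on the link, the long exact sequence of the pair collapses to
\[
\mathrm{H}^n(K_{\Omega_J},K_{>\Omega_J}) \;\cong\; \overline{\mathrm{H}}^{n-1}(K_{>J}),
\]
which when substituted into Theorem~\ref{th: nonpos} yields the asserted formula for $\underline{\mathrm{cd}}(G)$.

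Next I would handle the two remaining equalities in a uniform way. By Theorem~\ref{fcd}, $\mathcal{F}\mathrm{cd}(G)$ equals the largest $n$ such that $\mathrm{H}^n_c(|\Delta|) \neq 0$. Davis's computation of the compactly supported cohomology of the Davis realization of a building expresses $\mathrm{H}^n_c(|\Delta|)$ as a direct sum of copies of $\overline{\mathrm{H}}^{n-1}(K_{>J})$ indexed by chambers according to their in-descent types, and these types range over all of $\mathcal{Q}$ (every spherical $J$ occurs, e.g. as the in-descent set of the longest element of $W_J$). Taking maxima identifies $\mathcal{F}\mathrm{cd}(G)$ with the right-hand side. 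The identical argument applied to the Davis complex $\Sigma_W$ of the Coxeter group $W$ itself, via equation~\eqref{eq: vcd} on a torsion-free subgroup of finite index, gives $\mathrm{vcd}(W) = \max\{n\mid \overline{\mathrm{H}}^{n-1}(K_{>J})\neq 0\}$, closing the chain.

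The main obstacle is verifying rigorously the identification $\Omega_J = \{J\}$ and the precise simple-complex-of-groups structure imposed on $K$ by chamber-transitivity; one must check that the fundamental Davis chamber is indeed the geometric realization of $\mathcal{Q}$ with its simplex stabilizers coinciding with the corresponding spherical parabolics, and that different spherical subsets cannot share a stabilizer inside $G$. Once this bookkeeping is carried out, the rest is the contractibility-of-star argument and a direct quotation of Davis's formula for $\mathrm{H}^*_c$ of a building.
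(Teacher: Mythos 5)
Your first half (the computation of $\underline{\mathrm{cd}}(G)$) follows the paper's route: realize $G$ as the fundamental group of a non-positively curved simple complex of finite groups over $\mathcal{Q}$ acting on the CAT(0) Davis realization, note $\Omega_J=\{J\}$, and use that $K_{\Omega_J}=K_J$ is a cone on $K_{>J}$ to rewrite $\mathrm{H}^n(K_{\Omega_J},K_{>\Omega_J})$ as $\overline{\mathrm{H}}^{n-1}(K_{>J})$ in Theorem \ref{th: main app nonpos}. Two small corrections there: the local group $P_J$ is \emph{not} in general isomorphic to the spherical parabolic $W_J$ (this already fails for thick buildings and for graph products of finite groups); all that is needed, and all that is true, is that $P_J\neq P_{J'}$ in $G$ whenever $J\neq J'$. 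Also $K_{\Omega_J}=|\{V\in\mathcal{Q}\mid V\geq J\}|$ is not the closed star of the vertex $J$ in $K$ (that would also contain the chains below $J$); it is simply the cone with cone point $J$ on $K_{>J}$, which is all your argument uses.

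The genuine gap is in the second half. The Davis--Dymara--Januszkiewicz--Meier--Okun computation of $\mathrm{H}^*_c$ of a building (and Davis' computation for $\Sigma_W$) does \emph{not} produce summands of the form $\overline{\mathrm{H}}^{n-1}(K_{>J})$; the summands are relative groups of the type $\mathrm{H}^n(K,K^{\mathrm{In}(w)})$, where $K^{\mathrm{In}(w)}$ is a union of mirrors (equivalently, up to a known identification, reduced cohomology of complements $L\setminus\sigma_J$ in the nerve $L$), indexed by chambers via their in-sets. So after applying Theorem \ref{fcd} and these formulas you obtain $\mathcal{F}\mathrm{cd}(G)=\mathrm{vcd}(W)=\max\{n\mid \mathrm{H}^n(K,K^{\mathrm{In}(w)})\neq 0\}$, but you still have to identify this maximum with $\max\{n\mid \overline{\mathrm{H}}^{n-1}(K_{>J})\neq 0\}$, i.e.\ pass from mirror-complement data to \emph{links}. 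That passage is a nontrivial theorem, and it is exactly where the paper invokes Dranishnikov's results ($\mathrm{vcd}(W)=\max\{n\mid\overline{\mathrm{H}}^{n-1}(\mathrm{Lk}(\sigma_J,L))\neq 0\}$) together with the homeomorphism $K_{>J}\cong\mathrm{Lk}(\sigma_J,L)$; the paper's chain is: Theorem \ref{th: main app nonpos} for $\underline{\mathrm{cd}}(G)$, then \cite[Cor.~8.2]{DDTJO}, \cite[Cor.~4.4]{Davis98} and Theorem \ref{fcd} for $\mathrm{vcd}(W)=\mathcal{F}\mathrm{cd}(G)$, then Dranishnikov to close the loop. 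By misquoting the building formula as if its summands were already $\overline{\mathrm{H}}^{n-1}(K_{>J})$, your argument silently assumes this bridging step, so as written the chain of equalities does not close; citing Dranishnikov (or proving the equality of the two maxima directly) repairs it and essentially reproduces the paper's proof.
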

\noindent This corollary shows that for finitely generated Coxeter groups and finite graph products of finite groups, the Bredon cohomological dimension and the virtual cohomological dimension coincide.

\section{Bredon cohomological dimension}
An important algebraic tool to study classifying spaces for proper actions is Bredon cohomology. This cohomology theory was introduced by Bredon in \cite{Bredon} for finite groups 
as a mean to develop an obstruction theory for equivariant extension of maps. It was later generalized to arbitrary groups by L\"{u}ck with applications to finiteness conditions  (see \cite[section 9]{Luck},  \cite{Luck1} and \cite{LuckMeintrup}).
We will recall some basic notions of this theory, mainly to establish notation.  For more details, we refer the reader to \cite{FluchThesis} and \cite{Luck}. \\

Let $G$ be a discrete group and let $ \mathcal{F}$ be the family of finite subgroups of $G$. The trivial subgroup of $G$ will be denoted by $1$. The orbit category $\orb$ is the category with $G$-sets $G/H$, $H\in\mathcal{F}$ as objects and $G$-maps as morphisms. Note that the set of morphisms $\mathrm{Mor}(G/H,G/K)$ can be identified with the fixed point set $(G/K)^H$ and that a $G$-map $G/H \rightarrow G/K : H \mapsto xK$ will be denoted by either $G/H \xrightarrow{x} G/K$ or $xK \in (G/K)^H$. A right $\orb$-module is a contravariant functor \[M: \orb \rightarrow \zmod.\] The right $\orb$-modules are the objects of an abelian category $\orbmod$, whose morphisms are natural transformations. The set of morphisms  between two objects $M,N\in\orbmod$ is denoted by $\mathrm{Hom}_{\orb}(M,N)$. This category has free objects; more precisely, its free objects are those of the form
$$P_K(-)=\Z[-,G/K]$$
where $K\in\mathcal{F}$ and $\Z[G/H,G/K]$ is the free $\Z$-module with basis the set of $G$-maps $G/H\to G/K$. A sequence of modules in $\orbmod$ is said to be exact if it is exact when evaluated at every object. There is also a Yoneda lemma that allows one to construct free (projective) resolutions in a similar way as in the case of ordinary cohomology. The Bredon cohomological dimension is the smallest possible length of a free (projective) resolution of the trivial object in $\orbmod$, which is the constant functor $\underline{\Z}:G/H\to\Z$. The $n$-th Bredon cohomology of $G$ with coefficients in a right $\orb$-module $M$ is denoted by
$$\mathrm{H}^n_{\mathcal{F}}(G,M)$$
and is defined as the $n$-th cohomology of the cochain complex obtained applying the contravariant functor $\text{Hom}_{\orb}(-,M)$ to a free (projective) resolution of $\underline{\Z}$.

We now continue assuming that the group $G$ admits a cocompact classifying space for proper actions $X$. In this case, the free $\mathcal{O}_{\mathcal{F}}G$-resolution $C_{\ast}(X^{-}) \rightarrow \underline{\mathbb{Z}}$ is finite dimensional and consists of finitely generated free modules. The existence of such a free resolution implies that the Bredon cohomology of $G$ commutes with  arbitrary direct sums of coefficient modules. And by a standard argument we get
\begin{equation} \label{eq: old cd from}\underline{\mathrm{cd}}(G) = \sup\{ n \in \mathbb{N} \ | \ \mathrm{H}^n_{\mathcal{F}}(G,P_K)\neq 0 \ \mbox{for some} \ K \in \mathcal{F} \}. \end{equation}
Our goal for the remainder of this section will be to derive a formula for $\underline{\mathrm{cd}}(G) $ involving the compactly supported cohomology of certain pairs of subcomplexes of $X$.
\begin{definition} \rm Given $K \in \mathcal{F}$, we define $N_K$ to be the submodule of $P_K$ generated by all $H \in \mathcal{F}$ that are conjugate to a proper subgroup of $K$. We define $Q_K$ as the cokernel of the natural inclusion $N_K \rightarrow P_K$. Hence, there is a short exact sequence of $\orb$-modules
\begin{equation} \label{eq: short exact seq free}   0 \rightarrow N_K \rightarrow P_K \rightarrow Q_K \rightarrow 0.   \end{equation}
Note that
\[Q_K(G/L)=\Bigg\{
\begin{aligned}
&\mathbb{Z}[G/L,G/K] &\text{ if } L=_GK\\
&0&\text{ otherwise}\\
\end{aligned}\] 
and $P_1=Q_{1}$. Here, $L=_GK$ means that $L$ and $K$ are conjugate in $G$.
\end{definition}
The next proposition shows that in (\ref{eq: old cd from}), we may replace $P_K$ by $Q_K$.
\begin{proposition} \label{prop: mod to dim} If a group $G$ admits a cocompact model for $\underline{E}G$, then
\[\underline{\mathrm{cd}}(G) = \sup\{ n \in \mathbb{N} \ | \ \mathrm{H}^n_{\mathcal{F}}(G,Q_K)\neq 0 \ \mbox{for some} \ K \in \mathcal{F} \}.\]
\end{proposition}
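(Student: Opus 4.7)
Since $\mathrm{Ext}^n_{\mathcal{O}_{\mathcal{F}}G}(\underline{\mathbb{Z}},-)$ vanishes for $n > \underline{\mathrm{cd}}(G)$ on every coefficient module, the inequality
\[ d := \sup\{n \in \mathbb{N} : \mathrm{H}^n_{\mathcal{F}}(G,Q_K) \neq 0 \ \mbox{for some} \ K \in \mathcal{F}\} \leq \underline{\mathrm{cd}}(G) \]
is immediate; in view of (\ref{eq: old cd from}), it suffices to prove $\underline{\mathrm{cd}}(G) \leq d$. The plan is to argue by contradiction, exploiting that $D := \underline{\mathrm{cd}}(G) < \infty$ by cocompactness of $X$.

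Suppose $d < D$. By (\ref{eq: old cd from}), choose $K \in \mathcal{F}$ with $\mathrm{H}^D_{\mathcal{F}}(G, P_K) \neq 0$ and $|K|$ minimal. If $|K| = 1$, then $N_K = 0$ so $P_K = Q_K$, giving the immediate contradiction $\mathrm{H}^D_{\mathcal{F}}(G, Q_K) \neq 0$. If $|K| > 1$, the relevant portion of the long exact sequence attached to (\ref{eq: short exact seq free}),
\[ \mathrm{H}^D_{\mathcal{F}}(G, N_K) \to \mathrm{H}^D_{\mathcal{F}}(G, P_K) \to \mathrm{H}^D_{\mathcal{F}}(G, Q_K) \to \mathrm{H}^{D+1}_{\mathcal{F}}(G, N_K), \]
has the third term vanishing (because $D > d$) and the fourth vanishing (because $D+1 > \underline{\mathrm{cd}}(G)$), so $\mathrm{H}^D_{\mathcal{F}}(G, N_K)$ surjects onto the nonzero group $\mathrm{H}^D_{\mathcal{F}}(G, P_K)$ and is in particular nonzero.

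To contradict the minimal choice of $K$, I will present $N_K$ as a quotient of a finite direct sum of $P_H$'s with $|H| < |K|$. Indexing such a sum by the proper subgroups $H$ of $K$, each inclusion $H \hookrightarrow K$ determines by Yoneda a canonical natural transformation $P_H \to P_K$ whose image lies inside $N_K$; the assembled map $\phi : \bigoplus_{H \lneq K} P_H \to N_K$ is surjective since any $yK \in N_K(G/L)$ has $H := y^{-1}Ly \lneq K$ and arises as the image of $yH \in P_H(G/L)$ under the $H$-th component. Setting $R := \ker \phi$, one further appeal to $D = \underline{\mathrm{cd}}(G)$ gives $\mathrm{H}^{D+1}_{\mathcal{F}}(G, R) = 0$, so the long exact sequence of $0 \to R \to \bigoplus P_H \to N_K \to 0$ produces a surjection
\[ \bigoplus_{H \lneq K} \mathrm{H}^D_{\mathcal{F}}(G, P_H) \twoheadrightarrow \mathrm{H}^D_{\mathcal{F}}(G, N_K) \neq 0. \]
Consequently $\mathrm{H}^D_{\mathcal{F}}(G, P_H) \neq 0$ for some $H \lneq K$, violating the minimality of $|K|$. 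The main delicate point is the two-fold invocation of the finiteness $\underline{\mathrm{cd}}(G) < \infty$ to kill the obstruction terms $\mathrm{H}^{D+1}_{\mathcal{F}}(G, N_K)$ and $\mathrm{H}^{D+1}_{\mathcal{F}}(G, R)$; without a cocompact model neither vanishing would be available and the argument would collapse.
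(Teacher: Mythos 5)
Your proof is correct, and it shares the paper's global strategy: take $D=\underline{\mathrm{cd}}(G)<\infty$, pick $K\in\mathcal{F}$ of minimal order with $\mathrm{H}^{D}_{\mathcal{F}}(G,P_K)\neq 0$ (possible by (\ref{eq: old cd from})), run the long exact sequence of (\ref{eq: short exact seq free}), and push the nonvanishing down to a subgroup of strictly smaller order to contradict minimality. Where you genuinely diverge is in how that push is carried out. The paper proves a dichotomy for \emph{arbitrary} coefficient modules: either $K=1$, or any $M$ with $M(G/L)=0$ for all $|L|\geq|K|$ has $\mathrm{H}^{D}_{\mathcal{F}}(G,M)=0$; this is shown by covering $M$ with a free module built from summands $P_L$ with $|L|<|K|$, which is in general an infinite direct sum, so the paper must invoke the fact that Bredon cohomology of $G$ commutes with arbitrary direct sums (a consequence of the finitely generated free resolution coming from the cocompact model). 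You instead exploit the specific shape of $N_K$: the Yoneda transformations $P_H\to P_K$ attached to the inclusions $H\lneq K$ assemble into a surjection $\bigoplus_{H\lneq K}P_H\twoheadrightarrow N_K$ with only finitely many summands, and your identification of the basis elements of $N_K(G/L)$ as the cosets $yK$ with $y^{-1}Ly\lneq K$ (which is exactly what the displayed formula for $Q_K$ encodes) makes the surjectivity check correct. Consequently, within your argument cocompactness is used only to guarantee $D<\infty$, so that $\mathrm{H}^{D+1}_{\mathcal{F}}(G,-)$ vanishes identically; no commutation with infinite sums is needed at this step, which makes your reduction slightly more elementary and self-contained (though both proofs still rest on (\ref{eq: old cd from}), whose standard derivation does use commutation with arbitrary direct sums). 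Two harmless remarks: the vanishing of $\mathrm{H}^{D+1}_{\mathcal{F}}(G,N_K)$ is not actually needed to conclude $\mathrm{H}^{D}_{\mathcal{F}}(G,N_K)\neq 0$ — exactness at $\mathrm{H}^{D}_{\mathcal{F}}(G,P_K)$ together with $\mathrm{H}^{D}_{\mathcal{F}}(G,Q_K)=0$ already gives the surjection — and in the last step one should note (as you implicitly do) that additivity of $\mathrm{H}^{D}_{\mathcal{F}}(G,-)$ over the \emph{finite} sum $\bigoplus_{H\lneq K}P_H$ is automatic.
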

\begin{proof}
Since $G$ admits a finite dimensional model for $\underline{E}G$, we have $\underline{\mathrm{cd}}(G)=n<\infty$. We need to show that there exists a $K\in\mathcal{F}$ such that
$\mathrm{H}^n_\mathcal{F}(G,Q_K)\neq 0$. Choose  $K \in \mathcal{F}$ with $|K|$ minimal, under all $K \in \mathcal{F}$ for which $\mathrm{H}^n_{\mathcal{F}}(G,P_K)\neq 0$.  We claim that one of the following two statements must be true.
\begin{itemize}
\item[(a)] $K=1$;
\item[(b)] for any $M\in \orbmod$ such that $M(G/L)=0$ for every $L\in\mathcal{F}$ with $|L|\geq|K|$, one has $\mathrm{H}^n_\mathcal{F}(G,M)= 0$.
\end{itemize}
To prove the claim, we may assume $K\neq 1$. If there is a module $M$ as in (b) but with $\mathrm{H}^n_\mathcal{F}(G,M)\neq 0$, we may form a free cover $P$ of $M$ consisting of a sum of free modules based at subgroups $L$ with $|L|<|K|$. Since the Bredon cohomology of $G$ commutes with direct sums and $\underline{\mathrm{cd}}(G)=n$, there must be some $L \in \mathcal{F}$ with $|L|<|K|$ such that $\mathrm{H}^n_\mathcal{F}(G,P_L)\neq 0$. This however contradicts the minimality of $K$, and proves that (b) holds. Hence, the claim is proven. In the case when $K=1$ we have $Q_K=P_K$.  If $K\neq 1$, the long exact sequence of cohomology functors obtained from (\ref{eq: short exact seq free}) implies that either $\mathrm{H}^n_\mathcal{F}(G,Q_K)\neq 0$ or $\mathrm{H}^n_\mathcal{F}(G,N_K)\neq 0$. But this last possibility can not happen since $N_K(L)=0$  for every $L\in\mathcal{F}$ with $|L|\geq|K|$. 
\end{proof}
\begin{definition}\rm For a proper $G$-CW-complex $X$ and a finite subgroup $K$ of $G$, we define
\[    X^K_{\mathrm{sing}}=\bigcup_{K\subsetneq L \in \mathcal{F}} X^L \subseteq X^K.  \]

\end{definition}
Given a CW-complex $X$, let $C_{\ast}(X)$ be its cellular chain complex and let $\mathrm{Hom}_c(C_n(X),\mathbb{Z})$ be the subspace of $\mathrm{Hom}(C_n(X),\mathbb{Z})$
consisting of those cochains that vanish on all but finitely many $n$-cells. The cohomology of $\mathrm{Hom}_c(C_{\ast}(X),\mathbb{Z})$ is the cohomology with compact support of $X$, denoted by $\mathrm{H}^{\ast}_c(X)$. If $A$ is a CW-subcomplex of $X$, we obtain a short exact sequence of chain complexes
\[      0\rightarrow C_c^{\ast}(X,A) \rightarrow \mathrm{Hom}_c(C_{\ast}(X),\mathbb{Z}) \rightarrow \mathrm{Hom}_c(C_{\ast}(A),\mathbb{Z}) \rightarrow 0 .   \]
The relative cohomology with compact support of the pair $(X,A)$ is by definition the cohomology of $C^{\ast}_c(X,A)$. \\

We can now state the main result of this section.
\begin{theorem}\label{th: compact support bredon} Let $G$ be a group that admits a cocompact model $X$ for $\underline{E}G$. Then for every $K\in\mathcal{F}$, we have
\[  \mathrm{H}^{\ast}_{\mathcal{F}}(G,Q_K)\cong  \mathrm{H}_c^{\ast}(X^K,X^K_\mathrm{sing})  \]
where $\mathrm{H}_c^\ast(X^K,X^K_\mathrm{sing})$ is the relative cohomology with compact support of the pair $(X^K,X^K_\mathrm{sing})$.
\end{theorem}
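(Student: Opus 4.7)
The plan is to exhibit an explicit free $\orb$-resolution of $\underline{\mathbb{Z}}$ coming from $X$, apply $\mathrm{Hom}_{\orb}(-,Q_K)$, and identify the resulting cochain complex degree-wise with $C_c^{\ast}(X^K,X^K_{\mathrm{sing}})$. Since $X$ is a cocompact model for $\underline{E}G$, the cellular chain complex $C_{\ast}(X^-)$ with its augmentation is a finite-length resolution of $\underline{\mathbb{Z}}$ by finitely generated free $\orb$-modules; fixing a finite set of orbit representatives $\sigma$ of $n$-cells, one has $C_n(X^-)\cong\bigoplus_\sigma P_{G_\sigma}$. Applying the Yoneda lemma yields
\[\mathrm{Hom}_{\orb}(C_n(X^-),Q_K)=\bigoplus_{\sigma:\,G_\sigma=_G K}\mathbb{Z}[(G/K)^{G_\sigma}],\]
whose cohomology is $\mathrm{H}^{\ast}_{\mathcal{F}}(G,Q_K)$.

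The core geometric observation is that an $n$-cell $\tau$ of $X$ lies in $X^K\setminus X^K_{\mathrm{sing}}$ precisely when its stabilizer is exactly $K$, since any strict finite overgroup of $K$ inside $G_\tau$ would place $\tau$ in $X^K_{\mathrm{sing}}$. Given a pair $(\sigma,gK)$ with $gK\in(G/K)^{G_\sigma}$, the inclusion $g^{-1}G_\sigma g\subseteq K$ combined with $|G_\sigma|=|K|$ forces $g^{-1}G_\sigma g=K$, so the cell $g^{-1}\sigma$ has stabilizer exactly $K$. A short verification shows that $(\sigma,gK)\mapsto g^{-1}\sigma$ is a well-defined bijection between $\bigsqcup_{\sigma:\,G_\sigma=_G K}(G/K)^{G_\sigma}$ and the set of $n$-cells of $X$ with stabilizer $K$, matching the cochain groups in each degree.

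It then remains to show that the two differentials agree under this identification. The cellular boundary $\partial_{n+1}\colon P_{G_\rho}\to\bigoplus_\sigma P_{G_\sigma}$ corresponds via Yoneda to the element $\sum_{gG_\sigma\in(G/G_\sigma)^{G_\rho}}[\rho:g\sigma]\,gG_\sigma$ of $\mathbb{Z}[(G/G_\sigma)^{G_\rho}]$, using that $\partial\rho\in C_n(X^{G_\rho})$ by rigidity of the $G$-action. Applying $\mathrm{Hom}_{\orb}(-,Q_K)$ and tracing the contravariant action of $Q_K$ on orbit-category morphisms turns the above into left-multiplication by $g$ on the coset coordinates. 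I expect the main obstacle to be matching this Yoneda description with the cellular-incidence picture; the key point is that $\phi(h^{-1}g\sigma)$ vanishes unless $gG_\sigma g^{-1}=K$ exactly, because otherwise $h^{-1}g\sigma$ is singular. This observation collapses the Bredon sum over $(G/G_\sigma)^{G_\rho}$ to the compactly supported sum over $\{\tau:G_\tau=K\}$ after the change of variables $\tau=h^{-1}g\sigma$. Taking cohomology then gives the desired natural isomorphism.
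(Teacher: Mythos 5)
Your argument is correct and produces the same isomorphism as the paper, but it is packaged quite differently, so a comparison is worthwhile. The paper never chooses bases: it first establishes the natural isomorphism $\mathrm{Hom}_{\orb}(M,Q_K)\cong\mathcal{H}\mathrm{om}_{\subK}(M,S_K)$ for arbitrary $M$ (Lemma \ref{lemma: nat transf}), shows that on free modules this ``finiteness'' condition coincides with the compact-support condition (Lemma \ref{lemma: coincide}), and then identifies $\mathcal{H}\mathrm{om}_{\subK}(C_{\ast}(X^{-}),S_K)$ with $C^{\ast}_c(X^K,X^K_{\mathrm{sing}})$ by evaluating at $G/K$; naturality in $M$ makes compatibility with the differentials automatic. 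You instead apply Yoneda degreewise and match bases: the pair $(\sigma,gK)$ with $g^{-1}G_\sigma g\subseteq K$ (hence $=K$, since $|G_\sigma|=|K|$) corresponds to the cell $g^{-1}\sigma$, these cells are exactly the cells of $X^K\setminus X^K_{\mathrm{sing}}$, and cocompactness (finitely many orbits) together with the fact that $Q_K(G/G_\sigma)$ consists of finite formal sums is precisely what produces the compact-support condition. The price of bypassing the paper's two lemmas is the hand verification of the differentials, which you only sketch; it does go through as you indicate: $\partial\rho\in C_n(X^{G_\rho})$ because $G_\rho$ fixes $\overline{\rho}$ pointwise, the component $P_{G_\rho}\to P_{G_\sigma}$ of $\partial$ is Yoneda-dual to $\sum[\rho:g\sigma]\,gG_\sigma$ over $gG_\sigma\in(G/G_\sigma)^{G_\rho}$, applying $Q_K$ gives precomposition $xK\mapsto gxK$, and comparing the coefficient at $(\rho,yK)$ with the cellular coboundary evaluated on $y^{-1}\rho$ matches term by term because every face of a cell of $X^K$ again lies in $X^K$, and such a face contributes on the Bredon side exactly when its stabilizer is exactly $K$, i.e.\ exactly when it is nonsingular. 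One small slip: your vanishing condition should refer to the stabilizer of the translated face, $h^{-1}gG_\sigma g^{-1}h=K$, not $gG_\sigma g^{-1}=K$; this is notational, not a gap. Finally, note that your basis identification agrees with the paper's map (which reads off the coefficient of the identity coset after translating a cell of $X^K$ to its orbit representative), so the two proofs construct literally the same isomorphism; yours is more elementary and explicit, the paper's is shorter on bookkeeping because naturality absorbs the differential check.
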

The following corollary will be a useful tool for determining the Bredon cohomological dimension of certain groups.
\begin{corollary}\label{cor: key cor} Let $G$ be a group that admits a cocompact model $X$ for $\underline{E}G$. Then,
\[    \underline{\mathrm{cd}}(G)= \max\{n \in \mathbb{N} \ |  \ \mathrm{H}_c^{n}(X^K,X^K_\mathrm{sing})  \neq 0 \ \mbox{for some} \ K \in \mathcal{F}\}.   \] 

\end{corollary}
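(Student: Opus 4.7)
The plan is simply to stitch together Proposition~\ref{prop: mod to dim} and Theorem~\ref{th: compact support bredon}, so essentially no new work is required. First I would invoke Proposition~\ref{prop: mod to dim} to rewrite
\[
\underline{\mathrm{cd}}(G) \;=\; \sup\{\, n \in \mathbb{N} \ | \ \mathrm{H}^n_{\mathcal{F}}(G,Q_K) \neq 0 \text{ for some } K \in \mathcal{F}\,\}.
\]
Then Theorem~\ref{th: compact support bredon} identifies each group $\mathrm{H}^n_{\mathcal{F}}(G,Q_K)$ with $\mathrm{H}^n_c(X^K, X^K_{\mathrm{sing}})$, so the supremum on the right equals the one in the statement.

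The second thing to check is that the supremum really is attained, i.e.\ can be written as a maximum. Since $X$ is cocompact and finite dimensional, the augmented cellular chain complex $C_\ast(X^{-}) \to \underline{\mathbb{Z}}$ is a finite length resolution of $\underline{\mathbb{Z}}$ by finitely generated free $\mathcal{O}_{\mathcal{F}}G$-modules. Hence $\underline{\mathrm{cd}}(G)$ is finite, and by Proposition~\ref{prop: mod to dim} there exists at least one $K \in \mathcal{F}$ realising the supremum; via Theorem~\ref{th: compact support bredon} the same $K$ realises the supremum on the right-hand side. Thus the supremum is a maximum.

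There is essentially no obstacle here beyond having the two preceding results on hand; the only minor subtlety is remembering to argue finiteness of $\underline{\mathrm{cd}}(G)$ in order to turn the $\sup$ into a $\max$. Cocompactness of $X$ is what enables this, and also (by the proof of Proposition~\ref{prop: mod to dim}) what allowed the reduction from $P_K$ to $Q_K$ in the first place, so the cocompactness hypothesis is used precisely at the two places where it was already used in the results being invoked.
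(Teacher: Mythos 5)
Your argument is correct and is exactly the paper's proof: the corollary is obtained by combining Proposition~\ref{prop: mod to dim} with Theorem~\ref{th: compact support bredon}, with the finiteness of $\underline{\mathrm{cd}}(G)$ (from cocompactness of $X$) ensuring the supremum is attained, just as noted in the proof of Proposition~\ref{prop: mod to dim}.
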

\begin{proof} This is immediate from Proposition \ref{prop: mod to dim} and Theorem \ref{th: compact support bredon}. 

\end{proof}
The rest of the section is devoted to proving Theorem \ref{th: compact support bredon}.\\
\begin{definition} \rm Let $K \in \mathcal{F}$ and consider the collection of subgroups
\[    \mFK=\{   H \in \mathcal{F}  \ | \ K \subseteq H \}.  \]  The subgroup category $\subK$ is the category defined as follows. The objects are $G/H$, for every $H \in \mFK$, and there is a unique morphism $G/H \rightarrow G/L$ if and only if $H\subseteq L$.
\end{definition}
A right $\subK$-module is a contravariant functor from $\subK$ to the category of $\Z$-modules. They form a category 
denoted by $\mbox{Mod-}\subK$. We may do homological algebra in this category in an entirely similar way as we did in $\mbox{Mod-}\orb$. The free right $\subK$-module based at $G/L$ will be denoted by $S_L$, it is given by $S_L(G/H)=\Z$ if $H\subseteq L$, $S_L(G/H)=0$ otherwise. Moreover, there is an obvious inclusion of categories
\[   \subK  \rightarrow \orb    \] 
that gives rise to a (co)induction and a restriction functor on the level of module categories. The restriction functor will be omitted from the notation.

\begin{definition} \rm Let $M$ be a right $\orb$-module and let $N$ be a right $\subK$-module. We define \[ \mathcal{H}\mathrm{om}_{\subK}(M,N)\] to be the submodule of $ \mathrm{Hom}_{\subK}(M,N) $   containing all functors $F$ satisfying the following property: for all $H,L \in \mathcal{F}_K$ and all $m \in M(G/L)$, $F(G/H)(M(gL)(m))\neq 0$ for at most finitely many $G/H\buildrel{g}\over\to G/L$.

\end{definition}
The definition of $ \mathcal{H}\mathrm{om}_{\subK}(M,N)$, which may seem artificial to the reader, is designed as a generalization to the Bredon setting of the group $\mathrm{Hom}_c(M,\Z)$ from \cite[Ch. VIII Lemma 7.4.]{Brown}. Indeed, we prove the following.

\begin{lemma} \label{lemma: nat transf}For each $M \in \orbmod$ and every $K \in \mathcal{F}$, there is a natural isomorphism
\[     \mathrm{Hom}_{\orb}(M,Q_K) \cong  \mathcal{H}\mathrm{om}_{\subK}(M,S_K) .  \]

\end{lemma}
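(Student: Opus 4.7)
The plan is to construct mutually inverse natural maps between the two sides.

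Define $\Phi \colon \mathrm{Hom}_{\orb}(M, Q_K) \to \mathcal{H}\mathrm{om}_{\subK}(M, S_K)$ as follows. Observe that $Q_K(G/K) = \Z[G/K, G/K]$ contains the identity $G$-map $\mathrm{id}_{G/K}$ as a distinguished basis element. Given $F \colon M \to Q_K$, let $\Phi(F)(m) \in \Z$ be the coefficient of $\mathrm{id}_{G/K}$ in the finite sum $F_{K}(m) \in Q_K(G/K)$; treat this as the only potentially nonzero component of a morphism $M \to S_K$ in $\mbox{Mod-}\subK$, since $S_K(G/H) = 0$ whenever $K \subsetneq H$. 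Two conditions need checking. First, the vanishing of $\Phi(F)$ on the image of $M(G/H) \to M(G/K)$ for every $K \subsetneq H \in \mathcal{F}$ follows from naturality of $F$ combined with $Q_K(G/H) = 0$, the latter because such $H$ cannot be $G$-conjugate to $K$ (they have strictly larger cardinality). Second, naturality of $F$ gives $F_K(M(\phi)(m)) = Q_K(\phi)(F_L(m))$ for any $G$-map $\phi \colon G/K \to G/L$ and any $m \in M(G/L)$; since $F_L(m) \in \Z[G/L, G/K]$ is a finite linear combination of $G$-maps, only finitely many such $\phi$ yield a nonzero coefficient at $\mathrm{id}_{G/K}$, which is the required finiteness condition.

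For the inverse $\Psi$, given $\psi \colon M(G/K) \to \Z$ in $\mathcal{H}\mathrm{om}_{\subK}(M, S_K)$, set $\Psi(\psi)_L = 0$ whenever $L \not=_G K$, and otherwise define
\[
\Psi(\psi)_L(m) \;=\; \sum_{\phi \in (G/K)^L} \psi\bigl(M(\phi^{-1})(m)\bigr) \cdot \phi \;\in\; Q_K(G/L),
\]
using that $|L| = |K|$ forces each $\phi \in (G/K)^L$ to be an isomorphism $G/L \xrightarrow{\sim} G/K$, so $\phi^{-1} \colon G/K \to G/L$ makes sense. Finiteness of this sum reduces to the case $L = K$: any $g \in G$ with $L = gKg^{-1}$ yields a $G$-isomorphism $G/L \cong G/K$ (namely $aL \mapsto agK$) and a parallel bijection $(G/K)^L \leftrightarrow (G/K)^K = N_G(K)/K$; this lets one rewrite each summand as $\psi(M(\phi')(\widetilde m))$ for a fixed $\widetilde m \in M(G/K)$ and $\phi'$ ranging over $N_G(K)/K$, whereupon the finiteness condition on $\psi$ applies.

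The main obstacle is verifying naturality of $\Psi(\psi)$ for a morphism $\alpha \colon G/H \to G/L$ in $\orb$. Three cases are straightforward: if neither $H$ nor $L$ is $G$-conjugate to $K$, both sides vanish; if $L =_G K$ but $H$ is not, then $Q_K(G/H) = 0$ and again both sides vanish; if both $H, L$ are conjugate to $K$, then $\alpha$ is an isomorphism and the required equality follows by reparametrising the sum through the bijection $(G/K)^H \leftrightarrow (G/K)^L$, $\phi \mapsto \phi \circ \alpha^{-1}$. The delicate case is $H =_G K$ but $L \not=_G K$, where one must show $\Psi(\psi)_H \circ M(\alpha) = 0$. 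Each summand on the left contains the factor $\psi(M(\alpha \circ \phi^{-1})(m))$, and the composite $\alpha \circ \phi^{-1} \colon G/K \to G/L$ factors as a canonical inclusion $G/K \hookrightarrow G/H'$ followed by an isomorphism $G/H' \xrightarrow{\sim} G/L$, for $H' \supsetneq K$ an appropriate conjugate of $L$; hence $M(\alpha \circ \phi^{-1})(m)$ lies in the image of $M(G/H') \to M(G/K)$, and the vanishing condition on $\psi$ forces the summand to be zero. Finally, $\Phi$ and $\Psi$ are mutually inverse by direct computation: the summand $\phi = \mathrm{id}_{G/K}$ in $\Psi(\psi)_K(m)$ contributes $\psi(m)$ to the coefficient of $\mathrm{id}_{G/K}$, showing $\Phi \circ \Psi = \mathrm{id}$, while writing $F_K(m) = \sum_{\phi} c_\phi \cdot \phi$ and exploiting naturality of $F$ yields $\Phi(F)(M(\phi^{-1})(m)) = c_\phi$, so $\Psi(\Phi(F))_K(m)$ reassembles to $F_K(m)$. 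Naturality of the isomorphism in $M$ is immediate from the componentwise definitions of $\Phi$ and $\Psi$.
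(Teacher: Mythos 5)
Your proof is correct and follows essentially the same route as the paper: your map $\Phi$ (extracting the coefficient of $\mathrm{id}_{G/K}$) is precisely composition with the paper's transformation $\delta_K\colon Q_K\to S_K$, and your inverse $\Psi$ is the paper's reconstruction formula $F(G/K)(m)=\sum_{gK\in (G/K)^K}F_0(G/K)(M(g^{-1}K)(m))\,gK$, extended to all objects $G/L$ with $L$ conjugate to $K$. The only difference is one of detail: you spell out the finiteness and naturality verifications (including the delicate case where $H$ is conjugate to $K$ but $L$ is not) that the paper leaves as easily verified, and you check $\Psi\circ\Phi=\mathrm{id}$ only at the object $G/K$, which indeed suffices because $Q_K$ vanishes away from the conjugates of $K$ and a natural transformation into $Q_K$ is determined by its component at $G/K$.
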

\begin{proof} Note that $S_K(G/L)$ vanishes  unless $L= K$. Consider the natural transformation of $\subK$-modules $$\delta_K : Q_K \rightarrow S_K $$ defined by 
\[\delta_K(G/K)(gK)=\Bigg\{
\begin{aligned}
&1 \text{ if } g \in K \\
&0\text{ otherwise}\\
\end{aligned}\]
and $\delta_K(G/H)=0$ if $H \neq K$.  Now define the natural map
\[  \Psi: \mathrm{Hom}_{\orb}(M,Q_K) \rightarrow   \mathrm{Hom}_{\mathcal{S}_{\mathcal{F}_K}G}(M,S_K) : F \mapsto F_0=\delta_K \circ F.\]
We will first show that $\Psi$ is an inclusion. Take $0 \neq F \in  \mathrm{Hom}_{\orb}(M,Q_K) $.
Since $F(G/H)$ is zero for all $H\neq_{G} K$, we must have $F(G/K)(m)\neq 0$ for some $m \in M(G/K)$. Let \[F(G/K)(m)=\sum_{gK \in (G/K)^{K}}n_ggK \in \mathbb{Z}[(G/K)^K]\] with  $n_{g_0}\neq 0$. 
Then $F_0(G/K)(M(g_0^{-1}K)(m))=n_{g_0}$, so $F_0\neq 0$ proving that $\Psi$ is injective. Next we show that the image of $\Psi$ is contained in $ \mathcal{H}\mathrm{om}_{\mathcal{S}_{\mathcal{F}_K}G}(M,S_K)$. Take $F \in  \mathrm{Hom}_{\orb}(M,Q_K)$ and consider $\Psi(F)=F_0$. If $F_0(G/H)$ is non-zero, this implies that $H=K$. So we just need to show that for each $m \in M(G/K)$, $F_0(G/K)(M(gK)(m))$ is non-zero for at most finitely many $gK \in (G/K)^K$. Since 
\[   F(G/K)(m)= \sum_{gK \in (G/K)^K} F_0(G/K)(M(g^{-1}K)(m))gK,    \]
this is clear. This formula also proves that $\Psi$ surjects onto the space $\mathcal{H}\mathrm{om}_{\mathcal{S}_{\mathcal{F}_K}G}(M,S_K) $. Indeed, one easily verifies that given a functor $F_0 \in \mathcal{H}\mathrm{om}_{\mathcal{S}_{\mathcal{F}_K}G}(M,S_K) $, this formula defines an $F \in \mathrm{Hom}_{\orb}(M,Q_K)$ such that $\Psi(F)=F_0$, by construction.
\end{proof}
\begin{definition} \rm Let $N$ be a right $\subK$-module and let $T \in \mathcal{F}$. We define \[\mathrm{Hom}^c_{\subK}(P_T,N) \]
to be the submodule of $ \mathrm{Hom}_{\subK}(P_T,N)$ containing all functors $F$ such that for all $H \in \mathcal{F}_K$,  $F(G/H)(gT)\neq 0$ for at most finitely many $gT \in (G/T)^H$.

\end{definition}
The next lemma shows that the submodules $\mathrm{Hom}^c_{\subK}(P_T,N) $ and  $ \mathcal{H}\mathrm{om}_{\subK}(P_T,N)$ of $ \mathrm{Hom}_{\subK}(P_T,N)$ coincide.
\begin{lemma} \label{lemma: coincide} Let $N$ be a right $\subK$-module and let $T \in \mathcal{F}$. Then
\[    \mathcal{H}\mathrm{om}_{\subK}(P_T,N)=  \mathrm{Hom}^c_{\subK}(P_T,N).   \]

\end{lemma}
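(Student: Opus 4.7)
The plan is to verify the two inclusions separately; both rely on $P_T$ being free and on explicit bookkeeping of basis elements in $(G/T)^H$.

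For $\mathrm{Hom}^c_{\subK}(P_T,N)\subseteq\mathcal{H}\mathrm{om}_{\subK}(P_T,N)$, I would take $F\in\mathrm{Hom}^c_{\subK}(P_T,N)$ and unwind the $\mathcal{H}\mathrm{om}$ condition. Given $H,L\in\mathcal{F}_K$ and $m=\sum_{i=1}^{r}n_{i}h_{i}T\in P_T(G/L)$, contravariance gives $P_T(gL)(m)=\sum n_{i}gh_{i}T$; if $F(G/H)$ applied to this element is nonzero, some $gh_{i}T$ must lie in the finite set $S=\{\alpha T\in(G/T)^{H}:F(G/H)(\alpha T)\neq 0\}$. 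Thus $g$ lies in the finite union $\bigcup_{\alpha T\in S,\,1\leq i\leq r}\alpha T h_{i}^{-1}$, which projects to finitely many cosets $gL$ because $T$ is finite. This is exactly the $\mathcal{H}\mathrm{om}$ finiteness condition.

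For the reverse inclusion, the natural move is to plug $L=T$ and $m=1\cdot T\in P_T(G/T)$ into the $\mathcal{H}\mathrm{om}$ condition, in which case $P_T(gT)(m)=gT$ and the condition matches $\mathrm{Hom}^c$ verbatim. The obstacle is that $T$ need not lie in $\mathcal{F}_K$. My plan is, for each $H\in\mathcal{F}_K$, either to observe that $(G/T)^{H}=\emptyset$ (so the $\mathrm{Hom}^c$ condition at $H$ is vacuous) or else to pick $g_{0}\in G$ with $g_{0}T\in(G/T)^{H}$, which forces $H\subseteq g_{0}Tg_{0}^{-1}$. Setting $L:=g_{0}Tg_{0}^{-1}$ (so $K\subseteq H\subseteq L$ and hence $L\in\mathcal{F}_K$) and $m:=g_{0}T\in P_T(G/L)$, the assignment $gL\mapsto P_T(gL)(m)=gg_{0}T$ should be a well-defined bijection $(G/L)^{H}\to(G/T)^{H}$ with inverse $\alpha T\mapsto\alpha g_{0}^{-1}L$, where well-definedness and injectivity both use $g_{0}^{-1}Lg_{0}=T$. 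The $\mathcal{H}\mathrm{om}$ finiteness for this $L$ and $m$ then translates term-by-term to the $\mathrm{Hom}^c$ finiteness at $H$.

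The only real obstacle is this mismatch in the second direction: one cannot directly take $L=T$ when $K\not\subseteq T$. Replacing $T$ by the $G$-conjugate $g_{0}Tg_{0}^{-1}$ containing $H$ is the natural remedy, and the bijection $(G/L)^{H}\cong(G/T)^{H}$ is what makes the translation between the two finiteness conditions canonical.
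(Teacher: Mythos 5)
Your proposal is correct and follows essentially the same route as the paper: the inclusion $\mathrm{Hom}^c_{\subK}(P_T,N)\subseteq\mathcal{H}\mathrm{om}_{\subK}(P_T,N)$ via finiteness of $T$ (finitely many $g$ can move the finite support of $m$ onto the finite support of $F(G/H)$), and the reverse inclusion by realizing $(G/T)^H$ through orbit-category morphisms into a conjugate of $T$ lying in $\mathcal{F}_K$. The only cosmetic difference is that you choose the conjugate $g_0Tg_0^{-1}$ separately for each $H$, whereas the paper fixes a single conjugate ${}^{y}T\in\mathcal{F}_K$ at the outset (handling the case where none exists by noting both Hom-groups vanish); this changes nothing of substance.
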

\begin{proof}
We may assume that ${}^{y}T=yTy^{-1} \in \mFK$ for some $y \in G$. Otherwise  both  $ \mathrm{Hom}^c_{\subK}(P_T,N)$ and $\mathcal{H}\mathrm{om}_{\subK}(P_T,N)$ are zero, and there is nothing to prove. Let $H \in \mFK$. Since $gT=P_T(G/H \xrightarrow{gy^{-1}} G/{}^{y}T)(yT)$ for every $gT \in (G/T)^H$, it follows that  \[\mathcal{H}\mathrm{om}_{\subK}(P_T,N)\subseteq  \mathrm{Hom}^c_{\subK}(P_T,N).\]
Now let $F \in \mathrm{Hom}^c_{\subK}(P_T,N)$, choose $H,L \in \mathcal{F}_K$,  \[\sum_{gT \in (G/T)^L}n_g gT \in P_T(G/L)\]
and suppose $F(G/H)$ is zero everywhere expect possible on $g_1T,\ldots,g_kT \in (G/T)^H$. Since $T$ is a finite group, at most a finite number of different $xL \in (G/L)^H$ can be chosen such that \[\sum_{gT \in (G/T)^L}n_g xgT \] has support non-disjoint form $\{g_1T,\ldots,g_kT\} $, proving that 
\[    \mathrm{Hom}^c_{\subK}(P_T,N)\subseteq \mathcal{H}\mathrm{om}_{\subK}(P_T,N).  \]
\end{proof}

\begin{proof}[proof of Theorem \ref{th: compact support bredon} ]
Let $C_{\ast}(X^{-})\rightarrow \mathbb{Z}$ be the free $\orb$-resolution obtained from $X$. By the assumptions on $X$, this resolution is finite dimensional and each module $C_n(X^-)$ is a finite sum of free $\orb$-modules of the form $P_H(G/-)$. Moreover, we have $\mathrm{H}^{m}_{\mathcal{F}}(G,Q_K)=\mathrm{H}^{m}(\mathrm{Hom}_{\orb}(C_{\ast}(X^{-}),Q_K))$ for every $m \in \mathbb{N}$, so it follows from Lemma \ref{lemma: nat transf} that
\[  \mathrm{H}^{m}_{\mathcal{F}}(G,Q_K)=\mathrm{H}^{m}(\mathcal{H}\mathrm{om}_{\subK}(C_{\ast}(X^{-}),S_K))     \]
for every $m \in \mathbb{N}$.

Note that $S_K$ vanishes everywhere except at $K$, where it equals $\mathbb{Z}$. Hence, by evaluating an $F \in \mathcal{H}\mathrm{om}_{\subK}(C_{\ast}(X^{-}),S_K) $ at $K$, we obtain a natural injection
\[ \alpha: \mathcal{H}\mathrm{om}_{\subK}(C_{\ast}(X^{-}),S_K)\rightarrow \mathrm{Hom}(C_{\ast}(X^K),\mathbb{Z}): F \mapsto F(G/K). \]
Since each $C_n(X)$ is a finitely generated free module, it follows from Lemma \ref{lemma: coincide} that the image of $\alpha$ is contained in $\mathrm{Hom}_c(C_{\ast}(X^K),\mathbb{Z})$. Moreover,
since $S_K(L)=0$ for all $K \subsetneq L$, it is now an easy matter to verify that the image of $\alpha$ coincides with $C^{\ast}_c(X^K,X^{K}_{\mathrm{sing}})$. We conclude that there is an natural isomorphism
\[\mathcal{H}\mathrm{om}_{\subK}(C_{\ast}(X^{-}),S_K)\cong C^{\ast}_c(X^K,X^{K}_{\mathrm{sing}})\]
which implies that
\[  \mathrm{H}^{\ast}_{\mathcal{F}}(G,Q_K)\cong  \mathrm{H}_c^{\ast}(X^K,X^K_\mathrm{sing}).  \]
\end{proof}

\section{Simple complexes of finite groups}
We start this section by recalling the definition and some basic properties of simple complexes of finite groups. We use \cite[Ch. II.12]{BridHaef} as a basic reference, and refer the reader there for more details. \\

Throughout, let $\mathcal{Q}$ be a finite poset.  
\begin{definition} \rm \label{def: complex}By a simple complex of finite groups $G(\mathcal{Q})$, we mean a collection of finite groups $\{P_{J}\}_{J \in \mathcal{Q}}$ such that
\begin{itemize}
\item[(1)] for each $J < T$ in $\mathcal{Q}$, one has an injective non-surjective homomorphism \[\varphi_{J,T}: P_J \rightarrow P_T\]
\item[(2)] if $J< T <V $ in $\mathcal{Q}$, then $\varphi_{J,V}=\varphi_{T,V}\circ \varphi_{J,T}$.
\end{itemize}
The group $P_J$ is called the local group at $J \in \mathcal{Q}$.  Associated to $G(\mathcal{Q})$, one has the direct limit, or fundamental group,
\[   \widehat{G(\mathcal{Q})}= \lim_{J \in \mathcal{Q}} P_J  \]
which is obtained by taking the free product of the local groups $P_J$ for all $J \in \mathcal{Q}$ and then adding the relations 
\[    \varphi_{J,T}(g)=g \ \ \ \ \ \mbox{for all $J < T \in \mathcal{Q}$, $g \in P_J$}. \]
Note that there is a canonical homomorphism $i_J: P_J \rightarrow  \widehat{G(\mathcal{Q})}$ for every $J \in \mathcal{Q}$. One says the simple complex of groups $G(\mathcal{Q}) $ is strictly developable if all the maps $i_J$ are injective. 

From now on, assume that $G(\mathcal{Q}) $ is a strictly developable complex of finite groups and denote $G= \widehat{G(\mathcal{Q})}$. We will identify $P_J$ with its image under $i_J$, and omit the map $i_J$ from the notation. Consider  the poset $\mathcal{P}$  whose elements are the pairs $(gP_{J},J)$ for all $J \in \mathcal{Q}$ and all $gP_{J}$ in $G/P_J$, such that 
\begin{equation} \label{development}(gP_{J},J) < (g'P_{T},T) \ \ \ \Longleftrightarrow J <T \ \mbox{and} \ gP_T=g'P_T.    \end{equation}

There is a surjective map of posets $\pi: \mathcal{P}\rightarrow \mathcal{Q}: (gP_J,J) \mapsto J$ that has a spitting $s: \mathcal{Q} \rightarrow \mathcal{P}: J \mapsto (P_J,J) $. The development of $G(\mathcal{Q})$ is the geometric realization $X=|\mathcal{P}|$ of the poset $\mathcal{P}$. The geometric realization $|\mathcal{Q}|$ of $\mathcal{Q}$ will be denoted by $K$. Given a simplex $\sigma$ of $K$, we denote the smallest vertex of $\sigma$ by $S(\sigma)$. Using the geometric realization of the poset map $s$, we view $K$ as a subcomplex of $X$. More generally, if $T$ is a subcomplex of $K$ we denote by $gT$ the image of $T$ under the geometric realization of
$$s_g:\mathcal{Q}\to\mathcal{P}:J\mapsto(gP_J,J).$$

The group $G$ acts simplicially and admissibly on $X$ (i.e. a simplex is fixed if and only if it is fixed pointwise). The stabilizer of a simplex $(g_1P_{J_1},J_1)<\ldots < (g_nP_{J_n},J_n)$ is precisely the conjugate ${g_1}P_{J_1}g_1^{-1}$ of the local group $P_{J_1}$. Note that $K$ is a strict fundamental domain for the action of $G$ on $X$ and that the orbit space $G \setminus X$ is homeomorphic to $K$. In particular, we conclude that $X$ is a proper cocompact $G$-CW-complex.

\end{definition}
\begin{remark}\rm  In \cite[Ch. II.12]{BridHaef}, a simple complex of groups is defined using a different order convention and without the non-surjectivity assumption, i.e. to $J<T$ in $\mathcal{Q}$ they associate an injective homomorphism $\varphi_{J,T}: P_{T} \rightarrow P_J$. We use our reversely ordered definition because it is more convenient for our purposes. Of course, the two definitions are equivalent as one can just replace the poset $\mathcal{Q}$ by the poset $\mathcal{Q}^{\mathrm{op}}$ with the opposite ordering. The added assumption of non-surjectivity is just a minor technical assumption needed to avoid certain degenerate cases.
\end{remark}
For the remainder of this section, let $G(\mathcal{Q})$ be a strictly developable simple complex of finite groups with development $X$ and direct limit $G=\widehat{G(\mathcal{Q})}$. Our goal is to express the compactly supported cochain complex of the pairs $(X^{P_J},X_{\mathrm{sing}}^{P_J})$ in terms of cochain complexes of certain pairs of subcomplexes of $K$.  Our method is based upon a method used by Davis in \cite{Davis98} to compute the compactly supported cohomology of the Davis complex of a Coxeter group. \\

Let $J \in \mathcal{Q}$. Denote by $F(J)$ the fixed poset of the action of $P_J$ on $\mathcal{P}$, i.e.
 \begin{eqnarray*}
  F(J)&=&\{  (gP_{V},V) \in \mathcal{P} \ | \ P_JgP_V = gP_V  \}  \\
 &=&\{  (gP_{V},V) \in \mathcal{P}\ | \ P_J \subseteq  gP_Vg^{-1} \}. 
 \end{eqnarray*} 
 Note that $X^{P_J}=|F(J)|$ and that 
 $X^{P_J}_{\mathrm{sing}}$ is the geometric realization of the poset
\[ F(J)_{\mathrm{sing}}=\{  (gP_{V},V) \in \mathcal{P}\ | \ P_J \subsetneq gP_Vg^{-1} \} . \]
This means that any simplex in $X^{P_J}\setminus X^{P_J}_{\mathrm{sing}}$ has a smallest vertex of the form $(gP_{U},U)$ with $P_J = gP_Ug^{-1}$. Now fix a total ordering $\preceq$ on $G$ and define
$$\mathcal{L}(J)=\{g\in G\mid g^{-1}P_Jg\text{ is a local group and $g$ is the largest element of }P_Jg\}.$$ 
Observe that if $(gP_V,V)\in F(J)$ and $g^{-1}P_Jg$ is not local, then $(gP_V,V)\in F(J)_{\mathrm{sing}}$. 

For any subset $\Omega\subseteq\mathcal{Q}$ we denote by $K_{\Omega}$, resp. $K_{>\Omega}$, the geometric realizations of the following subposets

$$K_{\Omega}=|\{V\in\mathcal{Q}\mid U\leq V\text{ for some }U\in\Omega\}|,$$
$$K_{>\Omega}=|\{V\in\mathcal{Q}\mid U< V\text{ for some }U\in\Omega\}|.$$

For $g\in\mathcal{L}(J)$, we define
$$\Omega_g=\{U\in\mathcal{Q}\mid P_U=g^{-1}P_Jg\}.$$
We stress that if $U$ and $V$ are two distinct element in $\Omega_g$, then the groups $P_V$ and $P_U$ coincide as subgroups of $G$ but $(gP_U,U)$ and $(gP_V,V)$ are two distinct elements of the poset $F(J)$. Note also that $gK_{\Omega_g}\subseteq X^{P_J}\cap gK$, but this could be a proper inclusion due to the following. For any $(gP_V,V)\in F(J)$, we have $g^{-1}P_Jg\subseteq P_V$. Hence, for any $U\in\Omega_g$ we get $P_U\subseteq P_V$. But we cannot deduce from this that $U\leq V$. However, we do have the next  result.

\begin{lemma}\label{sets} For every $J \in \mathcal{Q}$ and every $g \in \mathcal{L}(J)$, we have

$$(X^{P_J}_{\mathrm{sing}}\cap gK)\cup(gK_{\Omega_g})=X^{P_J}\cap gK \ \ \mbox{and} \ \  (X^{P_J}_{\mathrm{sing}}\cap gK)\cap(gK_{\Omega_g})=gK_{>\Omega_g}.$$
\end{lemma}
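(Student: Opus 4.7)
The plan is to verify both set equalities at the level of simplices, using three bookkeeping facts that are immediate from the definitions. First, since $s_g$ is a poset map into $\mathcal{P}$, every simplex $\tau$ of $gK$ has the form $s_g(\sigma)$ for some chain $\sigma=(W_0<\cdots<W_k)$ in $\mathcal{Q}$, and its vertex sequence is $(gP_{W_0},W_0)<\cdots<(gP_{W_k},W_k)$. Second, the quoted stabilizer description shows that the pointwise stabilizer of such a $\tau$ equals $gP_{W_0}g^{-1}$. Third, writing $H:=g^{-1}P_Jg$ (this is a \emph{local} group, equal to $P_U$ for every $U\in\Omega_g$, precisely because $g\in\mathcal{L}(J)$), we obtain the working criterion: $\tau\in X^{P_J}$ iff $H\subseteq P_{W_0}$, and $\tau\in X^{P_J}_{\mathrm{sing}}$ iff the inclusion is strict.

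For the first equality, $X^{P_J}_{\mathrm{sing}}\cap gK\subseteq X^{P_J}\cap gK$ is tautological, and for $gK_{\Omega_g}\subseteq X^{P_J}\cap gK$ one observes that a simplex in $gK_{\Omega_g}$ has $W_0\geq U$ for some $U\in\Omega_g$, hence $H=P_U\subseteq P_{W_0}$. Conversely, given $\tau\in X^{P_J}\cap gK$ with smallest vertex $(gP_{W_0},W_0)$, the criterion gives $H\subseteq P_{W_0}$; a case split on whether this inclusion is strict or an equality puts $\tau$ in $X^{P_J}_{\mathrm{sing}}\cap gK$ or in $gK_{\Omega_g}$ (in the equality case, $P_{W_0}=H$ forces $W_0\in\Omega_g$).

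For the second equality, if $\tau\in gK_{>\Omega_g}$ then $W_0>U$ strictly for some $U\in\Omega_g$, and the non-surjectivity clause of Definition \ref{def: complex}(1) upgrades $\varphi_{U,W_0}$ to a strict inclusion $H=P_U\subsetneq P_{W_0}$ inside $G$, so $\tau\in X^{P_J}_{\mathrm{sing}}\cap gK_{\Omega_g}$. For the reverse inclusion, take $\tau$ in the intersection: one has $W_0\geq U$ for some $U\in\Omega_g$ and $H\subsetneq P_{W_0}$; the alternative $W_0=U$ is excluded because it would give $P_{W_0}=P_U=H$, so $W_0>U$, and then every $W_i\geq W_0>U$ lies in the subposet defining $K_{>\Omega_g}$, proving $\tau\in gK_{>\Omega_g}$.

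The main conceptual point — and the place where I expect the bookkeeping to be delicate — is to keep apart two different senses in which two elements of $\Omega_g$ can ``coincide'': distinct $U,U'\in\Omega_g$ are different elements of the poset $\mathcal{Q}$ (so $(gP_U,U)$ and $(gP_{U'},U')$ are genuinely different vertices of $X$), yet they produce the \emph{same} subgroup $P_U=P_{U'}=H$ of $G$. The whole argument must therefore be carried out vertex-by-vertex in $\mathcal{P}$ rather than subgroup-by-subgroup in $G$, and the non-surjectivity assumption built into Definition \ref{def: complex}(1) is exactly what converts a strict subgroup inclusion $H\subsetneq P_{W_0}$ into the strict poset inequality $W_0>U$ needed for $gK_{>\Omega_g}$; without it, the second equality would generally fail.
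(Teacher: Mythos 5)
Your argument is correct and is essentially the paper's own proof: both reduce each containment to a condition on the smallest vertex $(gP_{W_0},W_0)$ of a simplex of $gK$, compare $H=g^{-1}P_Jg$ with $P_{W_0}$ inside $G$ (equality giving $W_0\in\Omega_g$, strict inclusion giving membership in $X^{P_J}_{\mathrm{sing}}$), and use the non-surjectivity clause of Definition \ref{def: complex}(1) to turn $U<W_0$ into $P_U\subsetneq P_{W_0}$. Your version merely makes explicit the simplex-level bookkeeping and the role of non-surjectivity that the paper leaves implicit.
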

\begin{proof} The first equality is a consequence of the obvious fact  that for any $(gP_U,U)\in F(J)-F(J)_{\mathrm{sing}}$, $U\in\Omega_g$. For the second equality, note first that if $U<V\in\mathcal{Q}$ with $U\in\Omega_g$, then $g^{-1}P_Jg=P_U\subsetneq P_V$ thus $(gP_V,V)\in F(J)_{\mathrm{sing}}$ and $V\not\in\Omega_g$. Conversely, if $U\leq V$ for some $U\in\Omega_g$ and $g^{-1}P_Jg=P_U<P_V$, then we must have $U<V$.  
\end{proof}

Now number the elements of $\mathcal{L}(J)$ ( i.e. write $\mathcal{L}(J)=\{g_1,g_2,g_3,\ldots  \}$ ) in accordance with the ordering $\preceq$ . Define $X^{P_J}=X^{P_J}_0$ and
 \[   X^{P_J}_n = X_{\mathrm{sing}}^{P_J}\cup \bigcup_{i>n} g_iK_{\Omega_{g_i}}.  \]
 for all $n>0$ and note that 
 $$X^{P_J}=X_{\mathrm{sing}}^{P_J}\cup \bigcup_{i>0} g_iK_{\Omega_{g_i}}.$$
 Moreover
 \begin{equation} \label{eq: limit3}  C^{\ast}_c(X^{P_J},X_{\mathrm{sing}}^{P_J}) =  \lim_{n}C^{\ast}(X^{P_J},X^{P_J}_n).   \end{equation}
 
 \begin{lemma} \label{lemma: important lemma3}For every $n\geq 1$, one has a natural isomorphism of chain complexes
 \[    \Psi_n: \mathrm{C}^{\ast}(X^{P_J}_{n-1},X^{P_J}_n) \xrightarrow{\cong} C^{\ast}(K_{\Omega_{g_n}},K_{>\Omega_{g_n}}) : f \mapsto \Psi(f), \]
 where $\Psi_n(f)(\sigma)=f(g_n\sigma)$ for every simplex $\sigma$  of  $K_{\Omega_{g_n}}$.
 \end{lemma}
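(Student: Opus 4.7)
My plan is to reduce the statement to a set-theoretic identification and then note that the simplicial map $\sigma \mapsto g_n \sigma$ induces the isomorphism on the nose. By definition, the relative cochain complex $C^*(X^{P_J}_{n-1}, X^{P_J}_n)$ is, in each degree, the free abelian group on the simplices of $X^{P_J}_{n-1}$ not lying in $X^{P_J}_n$, and likewise $C^*(K_{\Omega_{g_n}}, K_{>\Omega_{g_n}})$ is free on the simplices of $K_{\Omega_{g_n}}$ not lying in $K_{>\Omega_{g_n}}$. So the heart of the matter is to prove the identity
$$ X^{P_J}_{n-1} \setminus X^{P_J}_n \;=\; g_n\bigl(K_{\Omega_{g_n}} \setminus K_{>\Omega_{g_n}}\bigr). $$

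From the definition, $X^{P_J}_{n-1} = X^{P_J}_n \cup g_n K_{\Omega_{g_n}}$, so the identity above reduces to
$$ g_n K_{\Omega_{g_n}} \cap X^{P_J}_n \;=\; g_n K_{>\Omega_{g_n}}. $$
The inclusion $\supseteq$ is immediate from Lemma \ref{sets}, which already gives $g_n K_{>\Omega_{g_n}} \subseteq X^{P_J}_{\mathrm{sing}} \subseteq X^{P_J}_n$. For $\subseteq$, I take $\sigma \in g_n K_{\Omega_{g_n}} \cap X^{P_J}_n$ and split into cases from the defining union for $X^{P_J}_n$. If $\sigma \in X^{P_J}_{\mathrm{sing}}$, Lemma \ref{sets} directly gives $\sigma \in g_n K_{>\Omega_{g_n}}$. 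The interesting case is $\sigma \in g_i K_{\Omega_{g_i}}$ for some $i>n$; here I argue by contradiction, assuming $\sigma \notin g_n K_{>\Omega_{g_n}}$. Then the smallest vertex $(g_n P_{J_1}, J_1)$ of $\sigma$ must satisfy $J_1 \in \Omega_{g_n}$, i.e.\ $P_{J_1} = g_n^{-1} P_J g_n$. Since that same vertex is also $(g_i P_{J_1}, J_1)$, I get $g_n^{-1} g_i \in P_{J_1} = g_n^{-1}P_J g_n$, hence $g_i \in P_J g_n$. This contradicts the defining property of $\mathcal{L}(J)$, whose elements are the unique maxima in distinct $P_J$-cosets.

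With the set-theoretic equality in hand, I define $\Psi_n(f)(\sigma) = f(g_n \sigma)$; it is well-defined and lands in $C^*(K_{\Omega_{g_n}}, K_{>\Omega_{g_n}})$ because $g_n K_{>\Omega_{g_n}} \subseteq X^{P_J}_n$, it is a bijection on the canonical bases by the identity above, and it commutes with coboundaries since the $G$-action on $X$ is simplicial. The expected main obstacle is precisely the contradiction argument in the case $\sigma \in g_i K_{\Omega_{g_i}}$ with $i>n$: this is the one place where the somewhat ad hoc ingredients (the choice of total order $\preceq$ and the selection in $\mathcal{L}(J)$ of one representative per $P_J$-coset) are used essentially, in order to rule out any other $g_i K_{\Omega_{g_i}}$ intersecting $g_n K_{\Omega_{g_n}}$ outside of $g_n K_{>\Omega_{g_n}}$.
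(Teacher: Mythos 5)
Your proof is correct and takes essentially the same route as the paper: the paper reduces, via cellular excision, to the same key identity $g_nK_{\Omega_{g_n}}\cap X^{P_J}_n=g_nK_{>\Omega_{g_n}}$, which it establishes exactly as you do --- Lemma \ref{sets} handles the singular part, and the fact that distinct elements of $\mathcal{L}(J)$ are the (unique) maxima of distinct cosets $P_Jg$ rules out intersections with $g_iK_{\Omega_{g_i}}$ for $i>n$ outside $g_nK_{>\Omega_{g_n}}$. One cosmetic remark: relative cochains are $\mathbb{Z}$-valued functions on the simplices outside the subcomplex rather than the free abelian group on them, but since all the relevant simplices lie in the finite complex $g_nK$ this distinction is immaterial here.
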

 \begin{proof} 
 It is immediate from cellular excision that \[\mathrm{C}^{\ast}(X^{P_J}_{n-1},X^{P_J}_n) =C^{\ast}(g_nK_{\Omega_{g_n}}, g_nK_{\Omega_{g_n}}\cap X^{P_J}_n).\] We claim that $g_nK_{\Omega_{g_n}}\cap X^{P_J}_n= g_nK_{>\Omega_{g_n}}$. Let  $m>n$ and consider a simplex $g_n\sigma \in g_nK_{\Omega_{g_n}}\cap g_mK_{\Omega_{g_m}}$. If follows that for $V=S(\sigma)$,  $g_mP_{V}=g_nP_{V}$, $V\geq U_{g_n}\in\Omega_{g_n}$ and $V\geq U_{g_m}\in\Omega_{g_m}$.  Since $m>n$ and $g_n$ is the largest element of $g_nP_{U_{g_n}}$, we conclude that $V \in K_{>\Omega_{g_n}}$ thus $g_n\sigma\in g_nK_{>\Omega_{g_n}}$. By Lemma \ref{sets} we also have $g_nK_{\Omega_{g_n}} \cap X_{\mathrm{sing}}^{P_J}\subseteq {g_n}K_{>\Omega_{g_n}}$. All together, this implies that $g_nK_{\Omega_{g_n}}\cap X_n^{P_J}\subseteq g_n K_{>\Omega_{g_n}}$. Conversely, if $g_n \sigma \in g_n K_{>\Omega_{g_n}}$ then clearly $g_n\sigma \in g_nK_{\Omega_{g_n}}\cap  X_{n}^{P_J}$ since $g_n \sigma  \in X_{\mathrm{sing}}^{P_J}$. This proves the claim. Therefore there is a natural identification of  $C^{\ast}(X^{P_J}_{n-1},X^{P_J}_{n})$ with $C^{\ast}(g_nK_{\Omega_{g_n}},g_n K_{>\Omega_{g_n}})$. Combining this identification with the chain map induced by multiplication with $g_n$ we get the result.
 \end{proof}
 \begin{lemma}\label{eq: splitting3} 
For every $g \in \mathcal{L}(J)$ there is a  chain map
 \begin{equation} \rho^J_{g} : \mathrm{C}^{\ast}(K_{\Omega_g}, K_{>\Omega_g}) \rightarrow \mathrm{C}^{\ast}_{c}(X^{P_J},X_{\mathrm{sing}}^{P_J}) \end{equation}
 that it is given by
 \[\rho^{J}_{g}(f)(v\sigma)=  \Bigg\{
 \begin{aligned}
 & f(\sigma) \text{ if } \sigma \subseteq  K_{\Omega_{g}}\ \mbox{and} \ v \in P_{J}g\\
 &0\text{ otherwise.}\\
 \end{aligned}                 \] 
 \end{lemma}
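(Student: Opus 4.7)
The plan is to verify in order that (i) the formula gives a well-defined cochain on $X^{P_J}$, (ii) this cochain vanishes on $X^{P_J}_{\mathrm{sing}}$, (iii) it has finite support, and (iv) it commutes with the coboundary. Since $K$ is a strict fundamental domain for the $G$-action on $X$, every simplex $\tau$ of $X$ has a representation $\tau = v\sigma$ with $\sigma\subseteq K$ unique in $\sigma$, and $v$ is determined only up to right multiplication by an element $p\in P_{V_1}$, where $V_1=S(\sigma)$. The first task is to check independence of the choice of representative.

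For (i), if $v=qg$ with $q\in P_J$, then a replacement $v'=vp$ again lies in $P_Jg$ iff $gpg^{-1}\in P_J$, i.e.\ iff $p\in g^{-1}P_Jg$. When $V_1\in\Omega_g$ one has $P_{V_1}=g^{-1}P_Jg$ and this is automatic; when $V_1\notin\Omega_g$ but $V_1\geq U$ for some $U\in\Omega_g$ one must have $V_1>U$, whence $\sigma\subseteq K_{>\Omega_g}$ and $f(\sigma)=0$, so the formula returns $0$ regardless of representative. For (ii), a simplex $\tau=v\sigma$ with $v\in P_Jg$ and $\sigma\subseteq K_{\Omega_g}$ lies in $X^{P_J}_{\mathrm{sing}}$ iff $P_J\subsetneq vP_{V_1}v^{-1}$, equivalently iff $g^{-1}P_Jg=P_U\subsetneq P_{V_1}$; this forces $V_1>U$, hence $\sigma\subseteq K_{>\Omega_g}$ and $f(\sigma)=0$. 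For (iii), the finiteness of $K$ bounds the support of $f$ in $K_{\Omega_g}$, and for each such $\sigma$ the finiteness of $P_J$ bounds the number of distinct simplices $v\sigma$ with $v\in P_Jg$.

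For (iv), since $C^{\ast}_c(X^{P_J},X^{P_J}_{\mathrm{sing}})$ is a subcomplex of $C^{\ast}_c(X^{P_J})$, it suffices to check $\rho^J_g(\delta f)(\tau)=(\delta \rho^J_g(f))(\tau)$ on non-singular $\tau$. For such $\tau=v\sigma$ with $v\in P_Jg$ and $V_1\in\Omega_g$, each face $\partial_i\tau=v\partial_i\sigma$ for $i>0$ keeps the smallest vertex of $\tau$, so it remains non-singular with the same $v\in P_Jg$ and $\partial_i\sigma\subseteq K_{\Omega_g}$, yielding $\rho^J_g(f)(\partial_i\tau)=f(\partial_i\sigma)$. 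The face $\partial_0\tau$ has smallest vertex $V_2>V_1$, and the non-surjectivity of $\varphi_{V_1,V_2}$ gives $P_{V_1}\subsetneq P_{V_2}$, so $\partial_0\tau\in X^{P_J}_{\mathrm{sing}}$ and $\partial_0\sigma\subseteq K_{>\Omega_g}$; thus both $\rho^J_g(f)(\partial_0\tau)$ and $f(\partial_0\sigma)$ vanish, and the term-by-term match gives the equality. For non-singular $\tau=v\sigma$ with $v\notin P_Jg$ (i.e.\ belonging to a different $\mathcal{L}(J)$-coset), a symmetric argument shows both sides vanish: right multiplication by $P_{V_1}$ preserves the coset $P_Jv$, which is disjoint from $P_Jg$, so no face contributes to $\rho^J_g(f)$. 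The main technical subtlety is step (i); once that is secured, the remaining items follow from case analysis using the $\Omega_g$-formalism and Lemma~\ref{sets}.
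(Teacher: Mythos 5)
Your overall route is the same as the paper's: well-definedness of the formula on representatives $v\sigma$ via the coset condition at the minimal vertex, vanishing on $X^{P_J}_{\mathrm{sing}}$, compact support from finiteness of $K$ and $P_Jg$, and the cochain-map identity by a face-by-face case analysis (the paper organizes that last check by whether $v\sigma$ is singular and whether $\sigma\subseteq K_{\Omega_g}$ rather than by cosets, but the content is the same). Steps (i)--(iii) are correct.

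The one place where your written justification does not hold up is the last case of (iv). For a non-singular $\tau=v\sigma$ with $P_Jv\neq P_Jg$ you assert that no face contributes to $\rho^J_g(f)$ because right multiplication by $P_{V_1}$ preserves the coset $P_Jv$, which is disjoint from $P_Jg$. That only covers the faces retaining the minimal vertex $V_1$: for the face $\partial_0\tau$ obtained by deleting $V_1$, the representative is determined only up to the larger group $P_{V_2}$ with $V_2=S(\partial_0\sigma)$ and $P_{V_1}\subsetneq P_{V_2}$, so the coset $vP_{V_2}$ may very well meet $P_Jg$; the coset argument does not rule out a contribution from this face. The conclusion is still true, and your own earlier steps close the gap: $\partial_0\tau$ has stabilizer $vP_{V_2}v^{-1}\supsetneq vP_{V_1}v^{-1}=P_J$, hence lies in $X^{P_J}_{\mathrm{sing}}$, and by your step (ii) the cochain $\rho^J_g(f)$ vanishes there --- exactly the observation you already used for $\partial_0\tau$ in the case $v\in P_Jg$. (Alternatively, $|P_{V_2}|>|P_J|=|g^{-1}P_Jg|$ forces $V_2\notin\Omega_g$, so $\partial_0\sigma\subseteq K_{>\Omega_g}$ and $f(\partial_0\sigma)=0$.) A second, smaller point: your split of the non-singular simplices into ``$v\in P_Jg$ and $V_1\in\Omega_g$'' versus ``$v\notin P_Jg$'' is exhaustive only because non-singularity gives $P_{V_1}=v^{-1}P_Jv$, so $v\in P_Jg$ automatically implies $V_1\in\Omega_g$; this should be stated. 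With these two repairs your proof is complete and essentially coincides with the paper's.
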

\begin{proof} Note first that for any $U\in\Omega_g$, $P_U=g^{-1}P_Jg$ thus $gP_U=P_Jg$.
 Our first task is to show that $\rho^{J}_g(f)$ is well defined on $\mathrm{C}^{k}(K_{\Omega_g}, K_{>\Omega_g}) $, as a map of abelian groups. Take $v\sigma=v'\sigma \in X^{P_J}$. We need to check that $\rho^{J}_{g}(f)(v\sigma)$ equals $\rho^{J}_{g}(f)(v'\sigma)$. If $\sigma$ is not contained in $K_{\Omega_g}$, then both these expressions are zero. Now assume that $\sigma \subseteq K_{\Omega_g}$. If $\sigma$ is contained in $K_{>\Omega_g}$ then $\rho^{J}_{gB}(f)(v\sigma)$ and $\rho^{J}_{gB}(f)(v'\sigma)$ both equal zero, since $f(K_{>\Omega_g})=0$. Now assume that $\sigma$ is not contained in $K_{>\Omega_g}$. This means that $S(\sigma)\in\Omega_g$. Since $v\sigma =v' \sigma$ it follows that $vP_{S(\sigma)}=v'P_{S(\sigma)}$ and hence $vg^{-1}P_Jg=v'g^{-1}P_Jg$. This implies that $ vg^{-1} \in P_{J}$ if and only if  $v'g^{-1} \in P_{J}$, proving that $\rho^{J}_{g}(f)(v\sigma)$ equals $\rho^{J}_{g}(f)(v'\sigma)$. Since $P_{J}g$ and $K$ are finite,  it is also clear that $\rho^{J}_{g}(f)$ has compact support. Now let us prove that $\rho^{J}_{g}(f)(X_{\mathrm{sing}}^{P_J})=0$. Take $v\sigma \in X_{\mathrm{sing}}^{P_J}$. Then one has $\rho^{J}_{g}(f)(v\sigma)=0$, since in this case $\sigma$ is contained in $K_{\Omega_g}$ if and only if it is contained in $K_{>\Omega_g}$.

To prove that $\rho^{J}_{g}$ is a chain map, take $f \in \mathrm{C}^{n}(K_{\Omega_g}, K_{>\Omega_g}) $. We will show that $\rho^{J}_{g}(d(f))=d(\rho^{J}_{g}(f))$. Take $v\sigma \in X^{P_J}$. Then 
\begin{equation} \label{eq: the one guy}   d(\rho^{J}_{g}(f))(v\sigma)= \rho^{J}_{g}(f)(vd(\sigma))=\sum_{i=1}^k (-1)^{\mathrm{sgn}(\sigma_i)}\rho^{J}_{g}(f)(v\sigma_i), \end{equation}
where the $\sigma_i$ are the faces of $\sigma$. On the other hand
\begin{equation}\label{eq: the other guy}
 \rho^{J}_{g}(df)(v\sigma)=  \Bigg\{
 \begin{aligned}
 & \sum_{i=1}^k  (-1)^{\mathrm{sgn}(\sigma_i)} f(\sigma_i)\text{ if } \sigma \subseteq  K_{\Omega_g}\ \mbox{and} \ v \in P_{J}g\\
 &0\text{ otherwise.}\\
 \end{aligned}                 
\end{equation}
First, assume that $v\sigma \subseteq X^{P_J}_{\mathrm{sing}}$. In this case all the $v\sigma_i$ are also contained in $X^{P_J}_{\mathrm{sing}}$, from which it follows that (\ref{eq: the one guy})=(\ref{eq: the other guy})=0. Next, assume that $\sigma \subseteq K_{\Omega_g}$. In this case $\sigma_i \subseteq K_{\Omega_g}$ and hence (\ref{eq: the one guy})=(\ref{eq: the other guy}). The final remaining case is when $v\sigma$ is not contained in $X^{P_J}_{\mathrm{sing}}$ and $\sigma$ is not contained in $K_{\Omega_g}$. This implies that  $\sigma \subseteq K_{\Omega_r}$ with $P_{S(\sigma)}=r^{-1}P_Jr\neq g^{-1}P_Jg.$ In this case we must show that (\ref{eq: the one guy})=0. Let $W=S(\sigma)$ and assume that $\sigma$ has a face $\sigma_i$ that is contained in $K_{\Omega_g}$ but not in $K_{>\Omega_g}$. This implies that $U=S(\sigma_i)\in\Omega_g$. But then we have $W < U$ thus $r^{-1}P_Jr=P_W\subsetneq P_U=g^{-1}P_Jg$, which is a contradiction. We conclude that if $\sigma$ has a face $\sigma_i$ that is contained in $K_{\Omega_g}$, it must be contained in $K_{>\Omega_g}$ and therefore satisfies $f(\sigma_i)=0$. This shows that $(\ref{eq: the one guy})=0$. 
\end{proof}
 \begin{theorem} \label{th: main complex bredon}The map
 \[     \rho^{J}= \bigoplus_{g\in \mathcal{L}(J)} \rho^{J}_{g} :   \bigoplus_{g\in \mathcal{L}(J)}   \mathrm{C}^{\ast}(K_{\Omega_g},K_{>\Omega_g}) \rightarrow \mathrm{C}^{\ast}_{c}(X^{P_J},X_{\mathrm{sing}}^{P_J})   \]
 is an isomorphism of chain complexes. 
 \end{theorem}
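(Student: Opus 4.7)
The strategy is to establish bijectivity of $\rho^J$ by constructing an explicit inverse; that $\rho^J$ is a chain map already follows from Lemma \ref{eq: splitting3} applied summand-wise. My candidate inverse $\tau^J\colon C^{\ast}_c(X^{P_J},X^{P_J}_{\mathrm{sing}}) \to \bigoplus_{g \in \mathcal{L}(J)} C^{\ast}(K_{\Omega_g}, K_{>\Omega_g})$ sends $F$ to the tuple $(f_g)_g$ defined by $f_g(\sigma) = F(g\sigma)$ for each simplex $\sigma$ of $K_{\Omega_g}$; here $g\sigma$ lies in $X^{P_J}$ because $g^{-1}P_Jg \subseteq P_{S(\sigma)}$ for every $\sigma \subseteq K_{\Omega_g}$. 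Well-definedness of each $f_g$ as a cochain vanishing on $K_{>\Omega_g}$ is immediate from the inclusion $gK_{>\Omega_g} \subseteq X^{P_J}_{\mathrm{sing}}$ recorded in Lemma \ref{sets}, and the finiteness of the support of $\tau^J(F)$ reduces to the observation that distinct elements of $\mathcal{L}(J)$ lie in distinct right cosets $P_J g$, so the simplices $\{g\sigma \mid g \in \mathcal{L}(J)\}$ are pairwise distinct in $X$ and only finitely many can be hit by a compactly supported $F$.

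The heart of the argument is the structural decomposition I would then prove: every simplex of $X^{P_J} \setminus X^{P_J}_{\mathrm{sing}}$ can be written uniquely as $g\sigma$ with $g \in \mathcal{L}(J)$ and $\sigma \subseteq K_{\Omega_g} \setminus K_{>\Omega_g}$. Given $v\tau$ in this set with $\tau \subseteq K$, the smallest vertex $(vP_{S(\tau)}, S(\tau))$ of $v\tau$ satisfies $vP_{S(\tau)}v^{-1} = P_J$; taking $g$ to be the largest element of the coset $P_Jv$ then places $g$ in $\mathcal{L}(J)$ with $S(\tau) \in \Omega_g$, and since the stabilizer of $g\tau$ equals $P_J$ (which contains $vg^{-1}$) one has $v\tau = g\tau$. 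Uniqueness of $g$ follows from the defining property of $\mathcal{L}(J)$.

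With this dictionary in hand, both composites $\tau^J \circ \rho^J$ and $\rho^J \circ \tau^J$ reduce to direct computations. For $\tau^J \circ \rho^J$, one evaluates $\rho^J_h(f_h)(g\sigma)$ on $g\sigma$ and observes that it vanishes unless $g \in P_J h$, which combined with $g,h \in \mathcal{L}(J)$ forces $g = h$, so the only surviving term equals $f_g(\sigma)$. For $\rho^J \circ \tau^J$, given $v\tau$ one uses the dictionary to rewrite $v\tau = g\tau$ and reads off that only the $g$-summand contributes and yields $f_g(\tau) = F(g\tau) = F(v\tau)$; separately, for $v\tau \in X^{P_J}_{\mathrm{sing}}$ both sides vanish by construction. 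The main obstacle I anticipate is the careful bookkeeping in the verification of $\rho^J \circ \tau^J = \mathrm{id}$: one must treat separately the cases where $\tau$ lies in $K_{\Omega_g} \setminus K_{>\Omega_g}$, in $K_{>\Omega_g}$, or outside $K_{\Omega_g}$, and one must exploit the ordering on $G$ that defines $\mathcal{L}(J)$ to prove the uniqueness statement in the dictionary cleanly.
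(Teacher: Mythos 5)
Your argument is correct, but it takes a different route from the paper. The paper never writes down an inverse: it filters $X^{P_J}$ by the subcomplexes $X^{P_J}_n=X^{P_J}_{\mathrm{sing}}\cup\bigcup_{i>n}g_iK_{\Omega_{g_i}}$, identifies each quotient $C^{\ast}(X^{P_J}_{n-1},X^{P_J}_n)$ with $C^{\ast}(K_{\Omega_{g_n}},K_{>\Omega_{g_n}})$ by excision (Lemma \ref{lemma: important lemma3}), proves by induction on $n$ that $\bigoplus_{i\le n}\rho^J_{g_i}$ is an isomorphism onto $C^{\ast}(X^{P_J},X^{P_J}_n)$ using the short exact sequence of the triple and the fact that $\rho^J_{g_n}$ splits it, and then recovers the compactly supported relative complex as the limit (\ref{eq: limit3}). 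You instead make explicit the combinatorial fact that the paper only uses implicitly: every simplex of $X^{P_J}$ not lying in $X^{P_J}_{\mathrm{sing}}$ is of the form $g\sigma$ for a unique $g\in\mathcal{L}(J)$ and a unique simplex $\sigma$ of $K_{\Omega_g}$ not contained in $K_{>\Omega_g}$ (note the phrasing: $\sigma\subseteq K_{\Omega_g}$ with $\sigma\not\subseteq K_{>\Omega_g}$, rather than $\sigma\subseteq K_{\Omega_g}\setminus K_{>\Omega_g}$, since the non-minimal vertices of $\sigma$ do lie in $K_{>\Omega_g}$; also, uniqueness of $\sigma$ uses that $K$ is a strict fundamental domain, and non-surjectivity of the structure maps is what rules out $S(\sigma)$ being strictly above an element of $\Omega_g$). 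Your dictionary, together with the observation that distinct elements of $\mathcal{L}(J)$ lie in distinct cosets $P_Jg$, gives both the finiteness needed for $\tau^J$ to land in the direct sum and the vanishing of all but one term in each composite, so $\tau^J$ is a two-sided inverse and $\rho^J$ is an isomorphism of complexes (its inverse being automatically a chain map). What your approach buys is a more direct, hands-on proof in which compact support is controlled by the injectivity of $(g,\sigma)\mapsto g\sigma$; what the paper's filtration buys is that the role of the total ordering on $G$ (the ``largest element of $P_Jg$'' condition pushing overlaps into later pieces of the filtration) and the passage to compactly supported cochains are packaged systematically by excision and the limit, at the cost of an induction. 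Both proofs ultimately rest on the same facts about $\mathcal{L}(J)$, $\Omega_g$ and Lemma \ref{sets}, and your verification steps, including the separate treatment of simplices in $X^{P_J}_{\mathrm{sing}}$, are sound.
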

 \begin{proof}
 Because  $g$ is the largest element in $P_{J}g$, it follows that for each $i\leq n$ the image of $\rho^J_{g_i}$ is contained in $C^{\ast}(X^{P_J},X^{P_J}_n)$. We now claim that for each $n \in \mathbb{N}$, there is an isomorphism of chain complexes
 \begin{equation}  \label{eq: this is it3}   \bigoplus_{i=1}^n \rho^J_{g_i} :   \bigoplus_{i=1}^{n}   \mathrm{C}^{\ast}(K_{\Omega_{g_i}}, K_{>\Omega_{g_i}}) \xrightarrow{\cong} C^{\ast}(X^{P_J},X^{P_J}_n).    \end{equation}
 We will prove this claim by induction on $n$. If $n$ is zero then the statement is obviously true. Now assume it is true for $n-1$ and consider the triple $(X^{P_J}, X^{P_J}_{n-1},X^{P_J}_{n})$. There is a short exact sequence of chain complexes
 \begin{equation*}    0 \rightarrow C^{\ast}(X^{P_J},X^{P_J}_{n-1}) \rightarrow C^{\ast}(X^{P_J},X^{P_J}_n) \rightarrow C^{\ast}(X^{P_J}_{n-1},X^{P_J}_n)\rightarrow 0.   \end{equation*}
 It now follows from Lemma \ref{lemma: important lemma3} and the induction hypothesis that there is a short exact sequence of chain complexes
 \begin{equation*}  0 \rightarrow \bigoplus_{i=1}^{n-1}   \mathrm{C}^{\ast}(K_{\Omega_{g_i}},K_{>\Omega_{g_i}}) \xrightarrow{\bigoplus_{i=1}^{n-1} \rho^{J}_{g_i}} C^{\ast}(X^{P_J},X^{P_J}_n) \xrightarrow{\Omega_n}  C^{\ast}(K_{\Omega_{g_n}},K_{>\Omega_{g_n}}) \rightarrow 0     \end{equation*}
 where $\Omega_n(f)(\sigma)=f(g_n\sigma)$ for every simplex $\sigma$  of $K_{\Omega_{g_n}}$.
 Since $\rho^J_{g_n}(f)(g_n\sigma)=f(\sigma)$ for every $f \in C^{\ast}(K_{\Omega_{g_n}},K_{>\Omega_{g_n}})$ and every simplex $\sigma$ of $K_{\Omega_{g_n}}$, it follows that $\rho^J_{g_n}$ is a splitting for this exact sequence, proving the claim. The theorem now follows from (\ref{eq: limit3}) and (\ref{eq: this is it3}).
 \end{proof}

  \section{Relative cohomological dimension}
Throughout this section, let $G$ be a discrete group that admits a contractible proper cocompact $G$-CW-complex $X$. This implies that $G$ is of type $FP_{\infty}$. If $G$ is virtually torsion-free then the virtual cohomological dimension of $G$ can be computed as (\ref{eq: vcd}). When $G$ is not virtually torsion-free the notion of virtual cohomological dimension is not defined, so one has to turn to a more general notion of cohomological dimension. In \cite{nucinkis99}, Nucinkis introduces a cohomology theory $\mathcal{F}\mathrm{H}^{\ast}(G,M)$ for groups $G$ and $G$-modules $M$, relative to the family of finite subgroups $\mathcal{F}$. The groups $\mathcal{F}\mathrm{H}^{\ast}(G,M)$ can be computed by taking the cohomology of a cochain complex obtained from applying $\mathrm{Hom}_G(-,M)$ to an $\mathcal{F}$-projective resolution $P_{\ast} \rightarrow \mathbb{Z}$. An $\mathcal{F}$-projective resolution $P_{\ast} \rightarrow \mathbb{Z}$ is by definition an exact chain complex of $G$-modules $P_{\ast} \rightarrow \mathbb{Z}$ that is split when restricted to every finite subgroup of $G$, and such that each $P_n$ is a direct summand of a $G$-module of the form $V\otimes \mathbb{Z}[\Delta]$, where $\Delta$ is the $G$-set $\coprod_{H \in \mathcal{F}}G/H$ and $V$ is a $G$-module. We remark that $\mathcal{F}\mathrm{H}^{\ast}(G,M)$ coincides with the Bredon cohomology group $\mathrm{H}^{\ast}_{\mathcal{F}}(G,\underline{M})$, where $\underline{M}$ is the fixed point functor associated to $M$ (e.g. see \cite[Prop. 3.1]{Degrijse}). 
\begin{definition} \rm \label{def: rel}
The cohomology theory $\mathcal{F}\mathrm{H}^{\ast}(G,-)$ yields the notion of relative cohomological dimension of $G$
\[  \mathcal{F}\mathrm{cd}(G)=  \sup\{ n \in \mathbb{N} \ | \ \exists M \in G\mbox{-Mod}   :  \mathcal{F}\mathrm{H}^n(G,M)\neq 0 \} .   \]
\end{definition}
It is immediate from the fixed point functor remark above that $\mathcal{F}\mathrm{cd}(G)\leq \underline{\mathrm{cd}}(G)$ and if $G$ is virtually torsion-free then it follows from the main theorem of \cite{MartinezNucinkis06} that $\mathcal{F}\mathrm{cd}(G)=\mathrm{vcd}(G)$. Moreover, by a result of Kropholler and Wall (see \cite[Th. 1.4]{KrophollerWall}) the augmented chain complex $C_{\ast}(X)\rightarrow \mathbb{Z}$ can be used to compute $\mathcal{F}$-relative cohomology of $G$. Since $X$ is finite dimensional, this implies that $\mathcal{F}\mathrm{cd}(G)< \infty$.  A theorem by St.~John-Green (\cite[Th. 3.11]{StJG}) says that if the relative cohomological dimension is finite, it coincides with the Gorenstein cohomological dimension, $\mathrm{Gcd}(G)$. By a result of Holm (see \cite[Th. 2.20]{Holm}), the Gorenstein cohomological dimension of a group can be computed as the largest $n$ for which the $n$-th cohomology of that group with coefficients in some projective module is not zero. Since every projective module is a direct summand of a free module and $G$ is of type $FP_{\infty}$, we conclude from all this that
\

\[  \mathcal{F}\mathrm{cd}(G)=  \sup\{ n \in \mathbb{N} \ | \   \mathrm{H}^n(G,\mathbb{Z}[G])\neq 0 \} .   \]
Since the cohomology with group ring coefficients of $G$ can be computed as the cohomology with compact support of $X$ (see \cite[Ch. VIII ex. 7.4]{Brown}), the following theorem is proven.

 \begin{theorem}\label{fcd} If $X$ is a contractible cocompact proper $G$-CW-complex, then 
\[ \mathcal{F}\mathrm{cd}(G) = \max \{  n \in \mathbb{N} \ | \ \mathrm{H}^n_{c}(X)\neq 0  \}. \]

 \end{theorem}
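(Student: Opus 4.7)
The plan is to assemble the statement by chaining together three results already invoked in the paragraphs preceding Theorem \ref{fcd}, so the work is essentially to verify that all the hypotheses are in place and that the reduction from arbitrary projective coefficients to $\mathbb{Z}[G]$ is legitimate.

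First I would note that cocompactness of the contractible proper $G$-CW-complex $X$ forces $G$ to be of type $\operatorname{FP}_{\infty}$, and finite-dimensionality of $X$ combined with the Kropholler--Wall theorem \cite[Th.~1.4]{KrophollerWall} shows that the augmented cellular chain complex $C_{\ast}(X)\to\mathbb{Z}$ is an $\mathcal{F}$-projective resolution of finite length, so $\mathcal{F}\mathrm{cd}(G)<\infty$. Having established finiteness, the next step is to invoke St.~John-Green \cite[Th.~3.11]{StJG} to identify $\mathcal{F}\mathrm{cd}(G)$ with the Gorenstein cohomological dimension $\mathrm{Gcd}(G)$.

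The third step is to apply Holm's characterization \cite[Th.~2.20]{Holm}, which expresses $\mathrm{Gcd}(G)$ as
\[ \mathrm{Gcd}(G)=\sup\{n\in\mathbb{N}\mid \mathrm{H}^{n}(G,P)\neq 0 \text{ for some projective } \mathbb{Z}G\text{-module } P\}. \]
To replace projectives by the single module $\mathbb{Z}[G]$, I would use that $G$ being $\operatorname{FP}_{\infty}$ implies ordinary group cohomology of $G$ commutes with arbitrary direct sums in the range where it is computed by finitely generated projectives; combined with the fact that every projective is a direct summand of a free $\mathbb{Z}G$-module, this yields
\[ \mathcal{F}\mathrm{cd}(G)=\sup\{n\in\mathbb{N}\mid \mathrm{H}^{n}(G,\mathbb{Z}[G])\neq 0\}. \]

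Finally, I would invoke Brown's identification \cite[Ch.~VIII, ex.~7.4]{Brown}: since $X$ is a contractible free-on-nothing (more precisely, proper with finite stabilizers) cocompact $G$-CW-complex, the standard computation via the isomorphism $\mathrm{Hom}_{G}(C_{\ast}(X),\mathbb{Z}[G])\cong \mathrm{Hom}_{c}(C_{\ast}(X),\mathbb{Z})$ (valid because each $C_{n}(X)$ is a finitely generated permutation module with finite stabilizers, so a summand of a finite sum of $\mathbb{Z}[G]$'s) delivers $\mathrm{H}^{\ast}(G,\mathbb{Z}[G])\cong \mathrm{H}^{\ast}_{c}(X)$, and the claimed formula follows. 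The only delicate point I expect is the justification of this last isomorphism with finite but nontrivial stabilizers; the standard argument in Brown is written for free actions, so I would spell out how averaging over finite stabilizers allows one to reduce $\mathrm{Hom}_{G}(\mathbb{Z}[G/H],\mathbb{Z}[G])$ to a finitely-supported-cochains summand, making the identification of complexes (and hence of their cohomologies) go through unchanged.
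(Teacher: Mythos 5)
Your proposal follows essentially the same chain of reductions as the paper: type $\operatorname{FP}_\infty$ plus Kropholler--Wall to get $\mathcal{F}\mathrm{cd}(G)<\infty$, St.~John-Green to identify it with $\mathrm{Gcd}(G)$, Holm's characterization via projective coefficients, the direct-sum/$\operatorname{FP}_\infty$ argument to reduce to $\mathbb{Z}[G]$, and Brown's identification $\mathrm{H}^\ast(G,\mathbb{Z}[G])\cong\mathrm{H}^\ast_c(X)$. The extra care you take with finite stabilizers in the last step is reasonable but not a departure; the paper simply cites \cite[Ch.~VIII ex.~7.4]{Brown} for that identification.
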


\section{Applications} 
The main application of our work in the previous sections is that we are able to compute the Bredon cohomological dimension of certain simple complexes of finite groups.
\begin{theorem}\label{th: main app} Let $G(\mathcal{Q})$ be a strictly developable complex of finite groups with development $X$. Denote $G=\widehat{G(\mathcal{Q})}$, $K=|\mathcal{Q}|$ and assume that $X^H$ is contractible for every finite subgroup of $G$. For $J\in\mathcal{Q}$, let
$$\Omega_J=\{U\in\mathcal{Q}\mid P_J=P_U\}.$$
Then
\[    \underline{\mathrm{cd}}(G)=\max\{ n \in \mathbb{N} \ | \  {\mathrm{H}}^{n}(K_{\Omega_J},K_{>\Omega_J})\neq 0 \ \mbox{for some} \ J \in \mathcal{Q}    \}.  \]

\end{theorem}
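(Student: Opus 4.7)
The plan is to combine Corollary \ref{cor: key cor} with Theorem \ref{th: main complex bredon}. Since $X^H$ is contractible (in particular nonempty) for every finite subgroup $H\leq G$, the development $X$ is a cocompact model for $\underline{E}G$, so Corollary \ref{cor: key cor} gives
\[ \underline{\mathrm{cd}}(G)=\max\{n\in\mathbb{N}\mid \mathrm{H}_c^n(X^H,X^H_{\mathrm{sing}})\neq 0\ \mbox{for some}\ H\in\mathcal{F}\}.\]
My first task is to restrict the range of $H$. Every cell stabilizer of the $G$-action on $X$ is a conjugate of a local group $P_J$, so if $H\in\mathcal{F}$ is not $G$-conjugate to any $P_J$, then $H\subsetneq\mathrm{Stab}(\sigma)$ for every $\sigma\in X^H$, forcing $X^H=X^H_{\mathrm{sing}}$ and making the relative compactly supported cohomology vanish. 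If instead $H=gP_Jg^{-1}$, left multiplication by $g^{-1}$ is a homeomorphism $X^H\to X^{P_J}$ sending $X^H_{\mathrm{sing}}$ onto $X^{P_J}_{\mathrm{sing}}$, hence $\mathrm{H}^n_c(X^H,X^H_{\mathrm{sing}})\cong \mathrm{H}^n_c(X^{P_J},X^{P_J}_{\mathrm{sing}})$. Consequently it suffices to take the maximum over $H=P_J$ with $J\in\mathcal{Q}$.

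Next I would apply Theorem \ref{th: main complex bredon}, which yields
\[ \mathrm{H}^n_c(X^{P_J},X^{P_J}_{\mathrm{sing}})\cong \bigoplus_{g\in\mathcal{L}(J)} \mathrm{H}^n(K_{\Omega_g},K_{>\Omega_g}).\]
For each $g\in\mathcal{L}(J)$, the conjugate $g^{-1}P_Jg$ is by definition a local group $P_{J'}$ for some $J'\in\mathcal{Q}$, and then $\Omega_g=\{U\in\mathcal{Q}\mid P_U=P_{J'}\}=\Omega_{J'}$. Conversely, given an arbitrary $J'\in\mathcal{Q}$, letting $g_0$ be the $\preceq$-largest element of $P_{J'}$, we have $g_0\in\mathcal{L}(J')$ with $g_0^{-1}P_{J'}g_0=P_{J'}$, so $\Omega_{g_0}=\Omega_{J'}$ and the summand $\mathrm{H}^n(K_{\Omega_{J'}},K_{>\Omega_{J'}})$ genuinely appears in the decomposition for $J=J'$.

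Putting these observations together, as $J$ ranges over $\mathcal{Q}$ and $g$ over $\mathcal{L}(J)$, the collection of subsets $\Omega_g$ that occur is exactly $\{\Omega_J\mid J\in\mathcal{Q}\}$, and hence
\[ \max_{J\in\mathcal{Q}}\{n\mid\mathrm{H}^n_c(X^{P_J},X^{P_J}_{\mathrm{sing}})\neq 0\}=\max_{J\in\mathcal{Q}}\{n\mid \mathrm{H}^n(K_{\Omega_J},K_{>\Omega_J})\neq 0\},\]
which is the asserted formula. The main obstacle is purely the bookkeeping in the reduction to local groups and the identification of the indexing sets $\Omega_g$ with the $\Omega_{J'}$; all substantive cohomological input is already packaged in Corollary \ref{cor: key cor} and Theorem \ref{th: main complex bredon}, so no new homological algebra is required.
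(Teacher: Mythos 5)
Your proposal is correct and follows essentially the same route as the paper: reduce via Corollary \ref{cor: key cor} to subgroups conjugate to local groups (observing $X^H=X^H_{\mathrm{sing}}$ otherwise and using translation homeomorphisms), then apply Theorem \ref{th: main complex bredon} and match the sets $\Omega_g$ with the $\Omega_{J'}$. The only difference is that you spell out the identification of the indexing sets, which the paper leaves implicit.
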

\begin{proof}
Since $X^H$ is contractible for every finite subgroup of $G$, it follows that $X$ is a cocompact model for proper actions of $G$. The point stabilizers of $X$ are exactly the conjugates of the local subgroups $P_J$, for $J \in \mathcal{Q}$. This implies that every finite subgroup of $G$ is contained in such a conjugate. Moreover, if $F$ is a finite subgroup of $G$ that is not conjugate to a local subgroup then it follows that $X^F=X^F_{\mathrm{sing}}$. Since fixed point sets of conjugate subgroups are homeomorphic via the obvious translation map,
 the theorem follows from Corollary \ref{cor: key cor} and Theorem \ref{th: main complex bredon}.
\end{proof}

In general, it is hard to determine when a given simple complex of finite groups is strictly developable, and even harder to decide whether or not its development has contractible fixed point sets. However, things become much more tractable in the presence of non-positive curvature. We refer the reader to \cite[Def. 12.26]{BridHaef} for the definition of a non-positively curved simple complex of groups.

\begin{theorem} \label{th: main app nonpos}Let $G(\mathcal{Q})$ be a non-positively curved complex of finite groups with $K=|\mathcal{Q}|$ simply connected and denote $G=\widehat{G(\mathcal{Q})}$. Then,
\[    \underline{\mathrm{cd}}(G)=\max\{ n \in \mathbb{N} \ | \ {\mathrm{H}}^{n}(K_{\Omega_J},K_{>\Omega_J})\neq 0 \ \mbox{for some} \ J \in \mathcal{Q}    \}.  \]

\end{theorem}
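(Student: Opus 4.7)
The plan is to reduce Theorem \ref{th: main app nonpos} to Theorem \ref{th: main app} by verifying the two hypotheses of the latter for $G(\mathcal{Q})$: namely strict developability, and contractibility of every fixed-point subcomplex $X^H$ for finite $H \le G$. Once both are in place, the displayed formula is exactly the conclusion of Theorem \ref{th: main app}.

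First I would invoke the standard developability result for non-positively curved complexes of groups from Bridson--Haefliger (Chapter~II.12, Theorem~12.28 and the surrounding discussion): a non-positively curved simple complex of finite groups $G(\mathcal{Q})$ with simply connected nerve $K=|\mathcal{Q}|$ is strictly developable, and its development $X$ is simply connected and carries a natural piecewise-Euclidean, locally $\mathrm{CAT}(0)$ metric on which $G$ acts properly, cocompactly and by cellular isometries. The Cartan--Hadamard theorem for geodesic metric spaces then upgrades this to a global $\mathrm{CAT}(0)$ structure on $X$; in particular $X$ is complete $\mathrm{CAT}(0)$.

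For the second hypothesis, fix a finite subgroup $H \le G$. The Bruhat--Tits (Cartan) fixed point theorem applied to the isometric action of $H$ on the complete $\mathrm{CAT}(0)$ space $X$ shows that $X^H$ is non-empty. As an intersection of fixed-point sets of individual isometries, $X^H$ is a closed convex subset of $X$ and hence contractible via the geodesic contraction onto any chosen fixed point. Admissibility of the $G$-action (as recorded in Section~3 of the excerpt) further ensures that $X^H$ is a genuine CW-subcomplex of $X$. Having verified both hypotheses, Theorem \ref{th: main app} applies and yields the stated formula.

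The main point requiring some care is matching the ordering convention of this paper with that of Bridson--Haefliger (as flagged in the remark following Definition \ref{def: complex}), since the notions of ``non-positively curved'' and ``development'' there are phrased using the opposite poset; translating the relevant statements through $\mathcal{Q}^{\mathrm{op}}$ is routine but should be made explicit at the start of the proof.
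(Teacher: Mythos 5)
Your proposal is correct and follows essentially the same route as the paper: invoke Bridson--Haefliger Theorem II.12.28 for strict developability and the complete CAT(0) structure on the development, deduce contractibility of each $X^H$ from the fixed-point theorem and convexity of fixed-point sets (the paper cites Corollary II.2.8(1) of Bridson--Haefliger, which packages exactly your Bruhat--Tits plus geodesic-contraction argument), and then apply Theorem \ref{th: main app}.
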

\begin{proof}  It follows from \cite[Th. 12.28]{BridHaef} that $G(\mathcal{Q})$ is strictly developable and its development is a complete CAT(0)-space on which $G$ acts by isometries. From Corollary II.2.8(1) in \cite{BridHaef} we conclude that $X^H$ is contractible for every finite subgroup $H$ of $G$. The statement is now immediate from Theorem \ref{th: main app}.

\end{proof}
This theorem can for example be used to determine when the fundamental group of a non-positively curved complex of finite groups is virtually free. This is illustrated in the following corollary for $n$-gons of groups. Briefly, an $n$-gon of groups is a simple complex of groups over the poset $\mathcal{Q}$, where $\mathcal{Q}$ is the poset of faces of the $2$-dimensional solid regular $n$-gon ordered by reverse inclusion  (e.g. see \cite[Ex. II.12.17(6)]{BridHaef}).
\begin{corollary} \label{ngon}The geometric dimension for proper actions of the fundamental group $G$ of a non-positively curved $n$-gon of finite groups is $2$. In particular, $G$ is not virtually free. 

\end{corollary}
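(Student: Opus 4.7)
The plan is to invoke Theorem~\ref{th: main app nonpos} and reduce the problem to a small topological computation. Recall that an $n$-gon of finite groups is a simple complex of groups over the poset $\mathcal{Q}$ of faces of a solid regular $n$-gon, ordered by reverse inclusion. Thus $\mathcal{Q}$ has a unique minimum $F$ (the $2$-cell), $n$ edge elements $e_1,\dots,e_n$, and $n$ vertex elements $v_1,\dots,v_n$, with $F<e_i$ for every $i$ and $e_i<v_j$ whenever $v_j$ is an endpoint of $e_i$ in the original $n$-gon. The geometric realization $K=|\mathcal{Q}|$ is the barycentric subdivision of the solid $n$-gon, hence homeomorphic to the closed $2$-disk $D^2$; in particular $K$ is simply connected, so Theorem~\ref{th: main app nonpos} applies to the fundamental group $G$.

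Because $K$ is $2$-dimensional, $\mathrm{H}^n(K_{\Omega_J},K_{>\Omega_J})$ vanishes for all $n\ge 3$ and all $J\in\mathcal{Q}$, yielding the upper bound $\underline{\mathrm{cd}}(G)\le 2$ directly from the theorem. For the matching lower bound I would take $J=F$. Strict developability (part of the non-positively curved hypothesis) together with the fact that each $\varphi_{F,T}\colon P_F\to P_T$ is injective but not surjective forces $P_F\subsetneq P_T$ as subgroups of $G$ for every $T>F$, so $\Omega_F=\{F\}$. Consequently $K_{\Omega_F}=K\cong D^2$, while $K_{>\Omega_F}=|\mathcal{Q}\setminus\{F\}|$ is the subcomplex spanned by the edge and vertex elements, i.e.\ the barycentric subdivision of the boundary of the $n$-gon, which is homeomorphic to $S^1$. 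Hence
\[ \mathrm{H}^2(K_{\Omega_F},K_{>\Omega_F})\cong \mathrm{H}^2(D^2,S^1)\cong \tilde{\mathrm{H}}^2(S^2)\cong \mathbb{Z}\neq 0, \]
and Theorem~\ref{th: main app nonpos} gives $\underline{\mathrm{cd}}(G)=2$.

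To pass from $\underline{\mathrm{cd}}$ to $\underline{\mathrm{gd}}$, note that the development $X$ of $G(\mathcal{Q})$ is a $2$-dimensional cocompact model for $\underline{E}G$, so $\underline{\mathrm{gd}}(G)\le 2$; combined with $\underline{\mathrm{gd}}(G)\ge\underline{\mathrm{cd}}(G)=2$ this forces $\underline{\mathrm{gd}}(G)=2$. Since every virtually free group acts properly on a tree with finite stabilizers and therefore satisfies $\underline{\mathrm{gd}}\le 1$, the group $G$ cannot be virtually free. The whole argument hinges on the observation that the choice $J=F$ makes $\Omega_F$ a singleton and reduces the computation to the standard disk-boundary pair; no real obstacle arises beyond this bookkeeping.
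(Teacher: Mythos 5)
Your proposal is correct and follows essentially the same route as the paper: choose $J$ to be the $2$-cell of the $n$-gon, note $\Omega_J=\{J\}$ so that the relevant pair is $(K,\partial K)\cong(D^2,S^1)$ with nonvanishing $\mathrm{H}^2$, apply Theorem~\ref{th: main app nonpos} for the lower bound, use the $2$-dimensional development for the upper bound, and rule out virtual freeness via proper actions on trees. Your explicit verification that $\Omega_J$ is a singleton (from non-surjectivity of the structure maps) and that $K$ is simply connected merely spells out details the paper leaves implicit.
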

\begin{proof} Since the developement of this $n$-gon of groups $G(\mathcal{Q})$ is a two-dimensional cocompact model for $\underline{E}G$, we have $\underline{\mathrm{gd}}(G)\leq 2$. Let $J$ be the element of the poset $\mathcal{Q}$ corresponding to the $2$-dimensional face of the $n$-gon. Then $\Omega_J=\{J\}$, thus $K_\Omega=K_J$ is contractible and $K_\Omega=K_{>J}$ is homeomorphic to a circle, since it corresponds to the outer edge of the $n$-gon. Therefore 
$\mathrm{H}^2(K_J,K_{>J})=\overline{\mathrm{H}}^1(K_{>J})\neq 0$. We conclude from Theorem \ref{th: main app nonpos} that $\underline{\mathrm{cd}}(G)\geq 2$ and hence $\underline{\mathrm{gd}}(G)= 2$. 

Since finitely generated virtually free groups $\Gamma$ act properly on a tree and hence satisfy $\underline{\mathrm{gd}}(\Gamma)\leq 1$, we deduce that $G$ is not virtually free. 

\end{proof}
Another application of Theorem \ref{th: main app nonpos} comes from considering groups acting properly and chamber transitively on buildings. Before we go into this, let us first recall some terminology and basic facts about buildings.\\

Let $(W,S)$ be a finite Coxeter system, i.e. $S=\{s_1,\ldots,s_n\}$ is a finite set of generators for the group $W$ with presentation
\[    W =  \langle s_1,\ldots,s_n \ | \ (s_is_j)^{m_{i,j}}\rangle \]
where $m_{i,j} \in \mathbb{N}\cup \{\infty\}$, $m_{i,i}=1$ and $m_{i,j}\geq 2$ if $i\neq j$. We refer to \cite{DavisBook} for a detailed introduction about Coxeter groups. Associated to a Coxeter group we have the poset  
$$\mathcal{Q}=\{J\subseteq S\mid\text{ the subgroup of $W$ generated by the elements of $J$ is finite}\},$$ 
ordered by inclusion. By convention, $\mathcal{Q}$ also contains the empty set as an initial object. The elements of $\mathcal{Q}$ are called spherical subsets. Now let $G$ be a discrete group together with subgroups $B, \{P_s\}_{s \in S}$ such that $B \subseteq P_s$ for all $s \in S$, and view the data $(G,B,\{P_s\}_{s\in S})$ as a chamber system (see \cite[Example 1.1]{Davis88}). For $J \subseteq S$ one defines the standard parabolic subgroup 
\[   P_J= \langle P_s \ | \ s \in J\rangle \subseteq G.  \] 

By definition we set $P_{\emptyset}=B$. If $J \in \mathcal{Q}$, then $P_J$ is called a spherical standard parabolic subgroup, while its conjugates are just called spherical parabolic subgroups. A coset $gP_J$ is called a $J$-residue. Now assume that $C=(G,B,\{P_s\}_{s\in S})$ is a building of type $(W,S)$ (see \cite[I.3]{Davis88}) such that all spherical parabolic subgroups of $G$ are finite, i.e. $P_J$ is a finite group for all $J \in \mathcal{Q}$. In this case $G$ acts properly and chamber transitively on the building $C$. Moreover any building of type $(W,S)$ that admits a chamber transitive automorphism group $G$ is of form $(G,B,\{P_s\}_{s\in S})$, where $B$ is the stabilizer of a chamber $c \in C$ and $P_s$ is the stabilizer of the $\{s\}$-residue containing $c$ (e.g. \cite[I.1]{Davis88} or see \cite[Ch. 5]{AB}). Note also that if $(W,S)$ is a finite coxeter system, then $W$ acts properly and chamber transitively on the building $(W,1,\{\langle s\rangle\}_{s \in S})$, which is of type $(W,S)$. \\

Consider the poset 
\[ \mathcal{P}= \{   gP_J \ | \ g \in G, J \in \mathcal{Q} \} \]
ordered by inclusion. We  can view $\mathcal{Q}$ as a subposet of $\mathcal{P}$ via $J\mapsto P_J$ and this map has a splitting given by $gP_J\mapsto J$. This implies that if we set $X=|\mathcal{P}|$, then $X$  is a simplicial complex with simplicial $G$-action that has
$K=|\mathcal{Q}|$ as a strict fundamental domain.  It is a theorem of Davis (\cite[Th. 11.1]{Davis88}) that $X$ supports a CAT(0)-metric such that $G$ acts on $X$ by isometries. It follows from the above and \cite[Cor. 12.22]{BridHaef} that $G$ is the fundamental group of the non-positively curved complex of groups over the poset $\mathcal{Q}$ with local groups $P_J$ where the maps $P_J\to P_{J'}$ whenever $J\subseteq J'$ are the obvious inclusions.  Observe also that  $\mathcal{P}$ can be identified with the poset (\ref{development}), since $gP_J \subseteq gP_U$ if and only if $J \subseteq U$. As the isotropy groups of the action of $G$ on $X$ are exactly the conjugates of the groups $P_J$, the fact that $X$ is a CAT-(0) space implies that $X$ is a cocompact model for $\underline{E}G$. Moreover, since $P_J\neq P_{J'}$ when $J\neq J'$, the set that we have denoted by $\Omega_J$ consists only of $J$. As $K_J$ is contractible we have $\mathrm{H}^n(K_J,K_{>J})=\overline{\mathrm{H}}^{n-1}(K_{>J})$. Stated in the world of buildings, Theorem \ref{th: main app nonpos} becomes the following result.

 \begin{theorem}\label{building} If $G$ acts properly and  chamber transitively on a building of type $(W,S)$, then
 \[\mathrm{vcd}(W)=\mathcal{F}\mathrm{cd}(G)=\underline{\mathrm{cd}}(G)= \max\{ n \in \mathbb{N} \ | \ \overline{\mathrm{H}}^{n-1}(K_{>J}) \neq 0 \ \mbox{for some} \ J \in \mathcal{Q}  \}.\]
In particular, the Bredon cohomological dimension and the virtual cohomological dimension of a Coxeter group coincide.
 \end{theorem}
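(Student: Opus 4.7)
The plan is to deduce all three equalities from Theorem~\ref{th: main app nonpos} together with a Davis-style decomposition of compactly supported cochains of a building. Writing $M$ for $\max\{ n \in \mathbb{N} \mid \overline{\mathrm{H}}^{n-1}(K_{>J}) \neq 0 \text{ for some } J \in \mathcal{Q}\}$, the equality $\underline{\mathrm{cd}}(G) = M$ follows at once from Theorem~\ref{th: main app nonpos}: the discussion immediately preceding the statement identifies $G$ as the fundamental group of a non-positively curved simple complex of finite groups over $\mathcal{Q}$ with local groups $\{P_J\}$ and CAT(0) development $X$; the distinctness of the $P_J$ forces $\Omega_J = \{J\}$; and contractibility of $K_J$ identifies $\mathrm{H}^n(K_J, K_{>J})$ with $\overline{\mathrm{H}}^{n-1}(K_{>J})$.

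The quantity $M$ depends only on the Coxeter system $(W,S)$ and not on the ambient group. Applying the first equality to $W$ itself, viewed as acting chamber-transitively on the thin building $(W,1,\{\langle s\rangle\}_{s \in S})$ with Davis realization $\Sigma$, therefore gives $\underline{\mathrm{cd}}(W) = M$. Since finitely generated Coxeter groups are virtually torsion-free, one has $\mathrm{vcd}(W) = \mathcal{F}\mathrm{cd}(W)$ (as noted in Section~4), and combined with $\mathcal{F}\mathrm{cd}(-) \leq \underline{\mathrm{cd}}(-)$ this yields both $\mathrm{vcd}(W) \leq M$ and $\mathcal{F}\mathrm{cd}(G) \leq M$.

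The remaining---and principal---step is to prove the lower bounds $\mathcal{F}\mathrm{cd}(G) \geq M$ and $\mathrm{vcd}(W) \geq M$. By Theorem~\ref{fcd} both reduce to exhibiting $\mathrm{H}^M_c(Y) \neq 0$ for $Y = X$ (respectively $\Sigma$). I would mirror the chain-level strategy of Theorem~\ref{th: main complex bredon}: using the chamber stratification $Y = \bigcup_{gB \in G/B} gK$ together with a total order $\preceq$ on $G/B$, assign to each chamber $gK$ a spherical subset $\Omega(gB) \in \mathcal{Q}$ recording which mirrors separate $gK$ from its $\preceq$-earlier neighbours, and adapt Lemmas~\ref{sets}--\ref{eq: splitting3} to produce an isomorphism of chain complexes
\[
\mathrm{C}^*_c(Y) \cong \bigoplus_{gB \in G/B} \mathrm{C}^*(K_{\Omega(gB)}, K_{>\Omega(gB)}).
\]
Chamber-transitivity guarantees that every $J \in \mathcal{Q}$ arises as some $\Omega(gB)$, so the maximum $n$ with $\mathrm{H}^n_c(Y) \neq 0$ equals $M$. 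This is essentially the classical Davis formula for the compactly supported cohomology of a building; the main obstacle is pinning down $\Omega(gB)$ correctly so that the chain-level splittings go through, a routine but combinatorially delicate adaptation of Section~3.
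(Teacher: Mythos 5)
Your first step and your upper bounds are sound and agree with the paper: Theorem~\ref{th: main app nonpos} gives $\underline{\mathrm{cd}}(G)=M$, and applying this together with $\mathcal{F}\mathrm{cd}\leq\underline{\mathrm{cd}}$ to $G$ and to $W$ (acting on the thin building) gives $\mathcal{F}\mathrm{cd}(G)\leq M$ and $\mathrm{vcd}(W)\leq M$. The gap is in your lower bound. The claimed isomorphism
\[
\mathrm{C}^*_c(Y)\;\cong\;\bigoplus_{gB\in G/B}\mathrm{C}^*\bigl(K_{\Omega(gB)},K_{>\Omega(gB)}\bigr)
\]
is false. In Section~3 the summands $(K_{\Omega_J},K_{>\Omega_J})$ arise only because everything is taken relative to the singular set $X^{P_J}_{\mathrm{sing}}$: it is this quotient that turns the excised piece ``$n$-th chamber modulo its overlap with the later chambers and with $X^{P_J}_{\mathrm{sing}}$'' into the pair $(K_{\Omega_{g_n}},K_{>\Omega_{g_n}})$ (Lemmas~\ref{sets} and~\ref{lemma: important lemma3}). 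For the absolute complex $\mathrm{C}^*_c(Y)$ the same filtration by chambers produces instead summands $\mathrm{C}^*(K,K^{A})$, where $K^{A}$ is the union of those mirrors of the chamber (realizations of $\{V\in\mathcal{Q}\mid s\in V\}$ for $s\in A$) along which it meets chambers occurring later in the ordering; these are pairs (chamber, part of its boundary), not (star of a spherical simplex, its punctured link). Concretely, take $W=G$ the infinite dihedral group acting on $Y=\mathbb{R}$, so $K$ is an interval and $\mathcal{Q}=\{\emptyset,\{s\},\{t\}\}$: for every chamber other than the first your recipe assigns the pair $(K_{\Omega},K_{>\Omega})=(\mathrm{point},\emptyset)$, so your formula predicts an infinite-rank $\mathrm{H}^0_c(\mathbb{R})$, which is zero; the correct summands $(K,K^{A})$ are acyclic for these chambers.

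Even with the correct Davis/DDTJO decomposition $\mathrm{H}^n_c(Y)\cong\bigoplus\mathrm{H}^n(K,K^{A})$, identifying the top nonvanishing degree with $M=\max\{n\mid\overline{\mathrm{H}}^{n-1}(K_{>J})\neq0\}$ is not formal: that identification is exactly the content of Dranishnikov's computation of $\mathrm{vcd}(W)$ in terms of links in the nerve $L$. This is why the paper does not argue as you propose: it gets $\mathrm{vcd}(W)=\mathcal{F}\mathrm{cd}(G)$ by citing the compactly supported cohomology computations for buildings and for the Davis complex (\cite{DDTJO}, \cite{Davis98}) together with Theorem~\ref{fcd}, and then gets $\mathrm{vcd}(W)=M$ from Dranishnikov's theorem (\cite{Dranishnikov}, \cite{Dranishnikov2}), using that $K_{>J}$ is homeomorphic to $\mathrm{Lk}(\sigma_J,L)$. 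Had your chain-level argument worked, it would in particular reprove Dranishnikov's formula by elementary bookkeeping, which is a warning sign; as it stands, the lower bounds $\mathcal{F}\mathrm{cd}(G)\geq M$ and $\mathrm{vcd}(W)\geq M$, and hence also the middle equality $\mathrm{vcd}(W)=\mathcal{F}\mathrm{cd}(G)$, are not established. To repair the proof you should either import the cited results as the paper does, or prove the genuinely needed statement relating $\max\{n\mid\mathrm{H}^n(K,K^{A})\neq0\}$ to $M$, which is substantial.
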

 \begin{proof}  
It follows from Theorem \ref{th: main app nonpos} that
  \[   \underline{\mathrm{cd}}(G)= \max\{ n \in \mathbb{N} \ | \ \overline{\mathrm{H}}^{n-1}(K_{>J}) \neq 0 \ \mbox{for some} \ J \in \mathcal{Q}  \}.   \]
On the other hand, it follows from \cite[Cor. 8.2]{DDTJO}, \cite[Cor. 4.4]{Davis98}  and Theorem \ref{fcd} that $\mathrm{vcd}(W)= \mathcal{F}\mathrm{cd}(G)$.
Let $L$ be geometric realization of the simplicial complex with vertex set $S=\{s_1,\ldots,s_n\}$ and such that $\emptyset\neq J\subseteq S$ spans a simplex $\sigma_J$ if and only if $J\in\mathcal{Q}$. Then $K$ is the cone of the barycentric subdivision of $L$.
 Note that for each $J \in \mathcal{Q}$, $K_{>J}$ is homeomorphic to the link $\mathrm{Lk}(\sigma_J,L)$.  This link is by definition the geometric realization of the set of simplices $\tau$ of $L$ such that $\tau\cup\sigma_J$ is also a simplex.
 Since Dranishnikov proves (see \cite{Dranishnikov},\cite{Dranishnikov2}) that
 \[   \mathrm{vcd}(W)=  \max\{ n \in \mathbb{N} \ | \ \overline{\mathrm{H}}^{n-1}(\mathrm{Lk}(\sigma_J,L)) \neq 0 \ \mbox{for some} \ J \in \mathcal{Q}  \}, \]
 the theorem is proven.
 \end{proof}
Given a finite graph $\Gamma$ whose vertices $s \in S$ are labelled with finite groups $P_s$, the associated graph product $G$ is an example of a group acting properly and chamber transitively on a building of type $(W,S)$, where $(W,S)$ is the right angled coxeter groups determined by the graph $\Gamma$ (see \cite[Th. 5.1]{Davis88} and \cite[Ex. 12.30(2)]{BridHaef}). Since the kernel of the surjection $G \rightarrow \prod_{s \in S} P_s$ acts freely on the realization of the associated building, which is contractible, it follows that $G$ is virtually torsion-free.  The following corollary is therefore immediate from the previous theorem.

 \begin{corollary} Every finite graph product of finite groups $G$ satisfies $\mathrm{vcd}(G)=\underline{\mathrm{cd}}(G)$.
 
 \end{corollary}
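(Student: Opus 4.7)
The plan is to derive the corollary as a direct consequence of Theorem \ref{building} together with the two observations in the preceding paragraph, namely that a finite graph product $G$ of finite groups acts properly and chamber transitively on a building of type $(W,S)$ for the associated right-angled Coxeter system, and that $G$ is virtually torsion-free. The first observation is supplied by \cite[Th. 5.1]{Davis88} and \cite[Ex. 12.30(2)]{BridHaef}; the second follows because the kernel of the natural surjection $G\to\prod_{s\in S}P_s$ is torsion-free (it acts freely on the contractible realization of the building), and has finite index since each $P_s$ is finite and $S$ is finite.

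With those two facts in hand, I would simply chain the equalities: Theorem \ref{building} applied to the chamber-transitive action of $G$ yields $\mathcal{F}\mathrm{cd}(G)=\underline{\mathrm{cd}}(G)$, and virtual torsion-freeness together with the main theorem of \cite{MartinezNucinkis06} (invoked in Section 4 right after Definition \ref{def: rel}) gives $\mathcal{F}\mathrm{cd}(G)=\mathrm{vcd}(G)$. Combining these two identities produces $\mathrm{vcd}(G)=\underline{\mathrm{cd}}(G)$, which is the claim.

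There is no genuine obstacle here: the corollary is a formal consequence of Theorem \ref{building} once virtual torsion-freeness of $G$ is recorded. The only point worth stating carefully in the write-up is that Theorem \ref{building} by itself equates $\mathrm{vcd}(W)$, not $\mathrm{vcd}(G)$, with $\underline{\mathrm{cd}}(G)$; the passage from $\mathcal{F}\mathrm{cd}(G)$ to $\mathrm{vcd}(G)$ is precisely where virtual torsion-freeness of $G$ (rather than of $W$) is used.
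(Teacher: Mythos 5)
Your proposal is correct and follows exactly the paper's route: the paragraph preceding the corollary records the chamber-transitive action on the right-angled building and the virtual torsion-freeness of $G$ via the kernel of $G\to\prod_{s\in S}P_s$, and the corollary is then read off from Theorem \ref{building} together with the identity $\mathcal{F}\mathrm{cd}(G)=\mathrm{vcd}(G)$ for virtually torsion-free groups from \cite{MartinezNucinkis06}. Your explicit remark that Theorem \ref{building} gives $\mathrm{vcd}(W)$, not $\mathrm{vcd}(G)$, and that the chain must pass through $\mathcal{F}\mathrm{cd}(G)$, is precisely the (implicit) content of the paper's ``immediate'' deduction.
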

We conclude this paper with a rephrasing of the question whether or not $\mathcal{F}\mathrm{cd}(G)=\underline{\mathrm{cd}}(G)$, when $G$ admits a cocompact model for proper actions.
\begin{question} \rm Does there exist a discrete group $G$ admitting a cocompact model for proper actions $X$ such that 
\[ \max \{  n \in \mathbb{N} \ | \ \mathrm{H}^n_{c}(X)\neq 0  \} <\max\{n \in \mathbb{N} \ | \ \exists K \in \mathcal{F} \ \mbox{s.t.} \ \mathrm{H}_c^{n}(X^K,X^K_\mathrm{sing})  \neq 0\}?\]

\end{question}

\end{document}